\documentclass[sn-mathphys-num]{sn-jnl}


\usepackage{graphicx} 
\usepackage{amsfonts}
\usepackage{amsmath,booktabs,threeparttable,boxedminipage}
\usepackage{amsthm}%
\usepackage{mathrsfs}%
\usepackage[title]{appendix}%
\usepackage{xcolor}%
\usepackage{textcomp}%
\usepackage{manyfoot}%
\usepackage{algorithm}%
\usepackage{algorithmicx}%
\usepackage{algpseudocode}%
\usepackage{listings}%
\usepackage{qbordermatrix}
\usepackage[noabbrev,nameinlink]{cleveref}
\usepackage{float}
\usepackage{lineno}
\usepackage[caption=false]{subfig} 

\crefname{subsection}{subsection}{subsections}  
\crefformat{equation}{\textcolor{blue}{#2\textcolor{blue}{(#1)}#3}}

\setlength{\textheight}{23cm} 
\addtolength{\parskip}{0.03in}

\algrenewcommand\algorithmicrequire{\textbf{Input:}}
\algrenewcommand\algorithmicensure{\textbf{Output:}}

\newcommand\argmin{\mathop{\mathrm{argmin}}}
\newcommand\calR{\mathcal{R}}
\newcommand\calN{\mathcal{N}}

\newcommand\calX{\mathcal{X}}

\newcommand\calA{\mathcal{A}}

\newcommand\calP{\mathcal{P}}
\newcommand\calS{\mathcal{S}}

\newcommand\rI{\mathrm{I}}
\newcommand\sT{\mathsf{T}}


 %


\numberwithin{equation}{section}
\numberwithin{figure}{section}
\numberwithin{table}{section}
\newtheorem{theorem}{Theorem}[section]
\newtheorem{lemma}{Lemma}[section]
\newtheorem{proposition}{Proposition}[section]

 \newtheorem{corollary}{Corollary}[section]

\raggedbottom

\begin{document}

\title[LSE]{Krylov iterative methods for linear least squares problems with linear equality constraints}


\author*[1]{\fnm{Haibo} \sur{Li}}\email{haibo.li@unimelb.edu.au}



\affil*[1]{\orgdiv{School of Mathematics and Statistics}, \orgname{The University of Melbourne}, \orgaddress{\street{Parkville}, \city{Melbourne}, \postcode{3010}, \state{VIC}, \country{Australia}}}




\abstract{
	We consider the linear least squares problem with linear equality constraints (LSE problem) formulated as $\min_{x\in\mathbb{R}^{n}}\|Ax-b\|_2 \ \mathrm{s.t.} \ Cx = d$. Although there are some classical methods available to solve this problem, most of them rely on matrix factorizations or require the null space of $C$, which limits their applicability to large-scale problems. To address this challenge, we present a novel analysis of the LSE problem from the perspective of operator-type least squares (LS) problems, where the linear operators are induced by $\{A,C\}$. We show that the solution of the LSE problem can be decomposed into two components, each corresponding to the solution of an operator-form LS problem. Building on this decomposed-form solution, we propose two Krylov subspace based iterative methods to approximate each component, thereby providing an approximate solution of the LSE problem. Several numerical examples are constructed to test the proposed iterative algorithm for solving the LSE problems, which demonstrate the effectiveness of the algorithms.
}

\keywords{linear least squares, linear equality constraints, Krylov subspace, Golub-Kahan bidiagonalization, null space restricted LSQR}

\pacs[MSC Classification]{15A09, 65F10, 65F20}

\maketitle

\section{Introduction}\label{sec1}
The linear least squares problem with equality constraints (LSE problem) arises frequently in various fields such as data fitting, signal processing, control systems and optimization \cite{damm2013linear,fear2004visualizing,zhu2007recursive,pisinger2007bivariate}. These problems involve minimizing a least squares objective function while ensuring that a set of linear equality constraints is satisfied. The general formulation of the LSE problem is formulated as:
\begin{equation}\label{LSE0}
	\min_{x\in\mathbb{R}^{n}}\|Ax-b\|_2 \ \ \mathrm{s.t.} \ \ 
	Cx = d,
   \end{equation}
where $A\in\mathbb{R}^{m\times n}$, and $C\in\mathbb{R}^{p\times n}$. It restricts the solution space to the set of solutions that satisfy both the least squares objective and the linear equality constraints, which is often used in cases where certain relationships between the variables are known a priori and must be preserved. The LSE problem \cref{LSE0} has a solution if and only if $Cx=d$ is consistent, and it has a unique solution if and only if $(A^{\sT},C^{\sT})^{\sT}$ has full column rank. There is a large amount of work on the analysis of the LSE problem; see e.g. \cite{wei1992algebraic,gulliksson1995backward,ding1998new,gulliksson2000perturbation,gulliksson2002perturbation}.

Despite their wide applicability, solving large-scale LSE problems efficiently remains a significant computational challenge. Classical solution approaches typically reduce the constrained LSE problem to an equivalent unconstrained problem by eliminating the constraints. The key strategy of these methods are the constraint substitution technique, which eliminates the constraints by reducing the dimension of the problem. 
The first one is usually called the null space method \cite{hanson1969extensions,lawson1995solving,schittkowski1978factorization,stoer1971numerical}. This method involves finding a null space basis for the matrix $C$ using a rank-revealing QR factorization \cite{chan1987rank,chan1992some,gu1996efficient}. The constraints are then incorporated into the LS problem by substituting this basis into the system, leading to a reduced, unconstrained problem of lower dimension. This approach provides numerical stability and is widely used in many practical settings. The second one is usually called the direct elimination method \cite{lawson1995solving}. In this method, a substitution is made directly by expressing certain solution components (those affected by the constraints) in terms of others. This can be accomplished using a pivoted LU factorization or a rank-revealing QR factorization of $C$ \cite{Golub2013}. The direct elimination method exhibits good numerical stability and efficiency, particularly when implemented with appropriate matrix factorizations.

In addition to constraint substitution methods, there are some other methods that transform the constrained LS problem to an unconstrained optimization problem. The method based on the Lagrange multiplier formulation \cite{golub1965numerical,heath1982some,Golub2013} is often useful. This approach introduces auxiliary variables (Lagrange multipliers) to incorporate the constraints into the optimization process, which constructs an augmented system by combining the linear constraints and the LS problem, and both can be solved simultaneously. This method provides a powerful and general way to enforce equality constraints during the optimization. Techniques like weighting and updating procedures can also be used to enforce constraints progressively, ensuring that the solution satisfies the constraints a posteriori \cite{van1985method,barlow1988direct,gulliksson1994iterative,stewart1997weighting}.

All the above methods, when implemented correctly, can provide a solution with satisfied accuracy. However, in many practical scenarios, the problem size can be very large. In such cases, matrix factorization-based methods become impractical due to their cubic scaling computational complexity. This highlights the need to develop new iterative methods for solving the LSE problem that do not rely on matrix factorizations. The Krylov subspace method is well-known for its effectiveness in solving linear systems, including linear equations and LS problems, where only matrix-vector multiplications are required during the iteration process \cite{liesen2013krylov, Golub2013}. However, up  to now, there is a lack of Krylov iterative methods specifically for the LSE problem, possibly due to an incomplete understanding of its properties. Establishing connections between the LSE and LS problems could be valuable, as it would aid in the development of efficient Krylov iterative methods for solving the LSE problem.

In this paper, we present a novel analysis of the LSE problem from the perspective of operator-type LS problems. Building on this framework, we propose two Krylov subspace based iterative methods for solving LSE problems. To this end, we construct two linear operators using the matrices $\{A,C\}$ and formulate two LS problems associated with these operators. Using these formulations, we investigate the structure of the solutions to the LSE problem and show that its minimum 2-norm solution can be decomposed into two components, each corresponding to the solution of one of the operator-based LS problems. Building on this connection, we derive two types of decomposed-form solution for the LSE problem. To approximate the solution, it is sufficient to solve the associated operator-form LS problems using the Golub-Kahan bidiagonalization process \cite{Golub1965, caruso2019convergence, li2024new, li2024characterizing}. This approach leads to Krylov subspace-based iterative procedures. Consequently, we develop two Krylov iterative methods for the LSE problem, each corresponding to solving one of the decomposed-form solutions. The proposed algorithms do not rely on any matrix factorizations. Instead, they follow an inner-outer iteration structure, where, at each outer iteration, an inner subproblem is approximately solved. We also propose a procedure for constructing LSE problems for testing purposes and present several numerical examples to illustrate the effectiveness of the proposed algorithms.
 
The paper is organized as follows. In \Cref{sec2}, we review three commonly used methods for the LSE problem. In \Cref{sec3}, we analyze the LSE problem from the perspective of operator-type LS problems and derive two types of decomposed-form solution. In \Cref{sec4} we proposed two Krylov subspace based iterative algorithms for approximating the decomposed-form solution. Numerical experiments are presented in \Cref{sec5}, and concluding remarks follow in \Cref{sec6}.

Throughout the paper, we denote by $\calN(\cdot)$ and $\calR(\cdot)$ the null space and range space of a matrix or linear operator, respectively, denote by $\rI$ and $\mathbf{0}$ the identity matrix and zero matrix/vector with orders clear from the context, and denote by $\mathrm{span}\{\cdot\}$ the subspace spanned by a group of vectors or columns of a matrix. We use $\calP_{\calS}$ to denote the orthogonal operator onto a closed subspace $\calS$.

\section{LSE problem and its computation}\label{sec2}
We review three classical methods for the LSE problem: the null space approach, the method of direct elimination, and the augmented system approach. 

The null space method was developed and discussed by a number of authors in the 1970s. The basic idea is that any vector $x\in\mathbb{R}^{n}$ satisfying the linear constraint $Cx=d$ can be written as $x=x_0+Zy$, where $x_0$ is a particular solution of $Cx=d$, and the columns of $Z\in\mathbb{R}^{n\times t}$ form a basis for $\calN(C)$. Let the QR factorization of $C$ be  
\begin{equation}
	CP = Q \bordermatrix*[()]{%
	R & \mathbf{0} & p \cr
	p & n-p  \cr
},
\end{equation}
where $P \in \mathbb{R}^{n \times n}$ is a permutation matrix representing the pivoting, $R$ is an upper triangular matrix and $Q$ is an orthogonal matrix. Now we can get a solution of $Cx=d$:
\begin{equation}
	x_0 = P \bordermatrix*[()]{%
	R^{-1}Q^{\sT}d  &  p \cr
	\mathbf{0}  &  \ \ n-p  \cr
	 &
	}.
\end{equation}
Now the LSE problem \cref{LSE0} becomes 
\begin{equation}\label{eq:transformed}
    \min_{y\in\mathbb{R}^{t}} \left\| AZy - (b -A x_0) \right\|.
\end{equation}
By solving the above standard LS problem to get the solution $y^{\dag}$, we get a solution $x^{\dag}=x_0+Zy^{\dag}$ to the LSE problem.

In the null space method, the matrix $Q$ should be stored explicitly or implicitly (by using e.g, Householder transformations), leading to a relatively high memory demands and implied operation counts. The more challenging point is that the matrix $Z$ is usually dense, which makes it inefficient to solve the LS problem \cref{eq:tranformed mx}. Also,  In recent years, there are some works about constructing a sparse null space matrix $Z$, where the QR factorization of $C$ with a threshold pivoting is used; see \cite{scott2022null,scott2022solving}. 

The second method is the direct elimination, which involves expressing the dependence of the selected $p$ components of the vector $x$ on the remaining $n-p$ components, and this relationship is then substituted into the LS problem in \cref{LSE0}. Suppose $P\in\mathbb{R}^{n\times n}$ is a permutation matrix such that $CP=\begin{pmatrix}
	C_1 \ C_2
\end{pmatrix}$ with $C_1\in\mathbb{R}^{p\times p}$ be a nonsingular matrix. Let 
\begin{equation}
	AP = \bordermatrix*[()]{%
	A_1 & A_2 & m \cr
	p & m-p  \cr
}, \quad \quad
x = Py = \bordermatrix*[()]{%
y_1  &  p \cr
y_2  &  \ \ n-p  \cr
&
}.
\end{equation}
Now we have the substitution $y_1=C_{1}^{-1}(d-C_2y_2)$. Combining this expression with the LS problem in \cref{LSE0}, we have the transformed LS problem
\begin{equation}\label{eq:ls2}
    \min_{y_2\in\mathbb{R}^{n-p}} \left\| \widetilde{A} y_2 -(b - A_1C_1^{-1}d)\right\|_2,
\end{equation}
where 
\begin{equation}\label{eq:tranformed mx}
	\widetilde{A} = A_2 - A_1C_1^{-1}C_2 \in \mathbb{R}^{m \times (n-p)}.
\end{equation}

Once we have the solution $y_2$, then we can compute $y_1$ and finally get the solution of \cref{LSE0} with the expression $x=P\begin{pmatrix}
	y_1 \\ y_2
\end{pmatrix}$.
To get $P$ and $C_1$, usually a QR factorization of $C$ with pivoting should be exploited. For sparse matrices $A$ and $C$, some strategies have been proposed to make that the transformed matrix $\widetilde{A}$ has some sparse structure \cite{scott2017solving,scott2022solving}, leading to a sparse LS problem \cref{eq:tranformed mx} that can be computed effectively by an iterative solver.

The third method is the augmented system method, which is based on the method of Lagrange multiplier for constrained optimization problem. Consider the following Lagrangian function for the constrained LS problem \cref{LSE0}:
\begin{equation}
	f(x,\lambda) = \frac{1}{2}\|Ax-b\|_{2}^{2}+\lambda^{\sT}(d-Cx), \quad \lambda\in\mathbb{R}^{p}.
\end{equation}
Finding the zero root of $\nabla_{x}f(x,\lambda)$ leads to 
\begin{equation*}
	A^{\sT}Ax-A^{\sT}b-C^{\sT}\lambda = \mathbf{0}.
\end{equation*}
By letting $r=b-Ax$ and using $Cx=d$, we have the following symmetric indefinite linear system:
\begin{equation}\label{LSE_aug}
	\begin{pmatrix}
		\mathbf{0} & A^{\sT} & B^{\sT} \\
		A & \rI & \mathbf{0} \\
		B & \mathbf{0} & \mathbf{0}
	\end{pmatrix}
	\begin{pmatrix}
		x \\ r \\ \lambda
	\end{pmatrix}=
	\begin{pmatrix}
		\mathbf{0} \\ b \\ d
	\end{pmatrix} .
\end{equation}
If $A$ and $C$ are sparse and have full rank, then \cref{LSE_aug} is a $(m+n+p)\times(m+n+p)$ sparse nonsingular linear system. Based on the above framework, there are several variants of practical algorithms. We do not discuss them in more details, but refer the readers to \cite{bjorck1984general,zhdanov2012method,zhdanov2015solving,scott2022computational,scott2022solving}.

\section{Decomposed-form solution of the LSE problem}\label{sec3}
In this section, we investigate the structure of the solutions of \cref{LSE0} and derive two decomposed-form expressions of the minimum 2-norm solution of \cref{LSE0}. We consider a more general case, which is formulated as
\begin{equation}\label{LSE2}
 \min_{x\in\calS}\|Ax-b\|_2, \quad
 \mathcal{S} = \{x\in\mathbb{R}^{n}: \|Cx-d\|_{2}=\mathrm{min} \} .
\end{equation}
In this paper, we also call \cref{LSE2} the LSE problem. Note that if $Cx=d$ is a consistent linear system, then \cref{LSE2} is equivalent to \cref{LSE0}. In the rest part of the paper, we focus on the analysis and computation of \cref{LSE2}.

The following theorem about the generalized linear least squares (GLS) problem will be used in the subsequent analysis. We refer to \cite{elden1982weighted,li2024new} for more details. 
\medskip
\begin{theorem}\label{thm:GLS_solv}
	For any $K\in\mathbb{R}^{m\times n}$ and $L\in\mathbb{R}^{p\times n}$, consider the GLS problem
	\begin{equation}\label{GLS}
		  \min_{x\in\mathbb{R}^{n}}\|Lx\|_{2} \ \ \ \mathrm{s.t.} \ \ \
		  \|Kx-g\|_{2}=\mathrm{min} .
	\end{equation}
	The following properties hold:
	\begin{enumerate}
		\item[(1)] a vector $x\in\mathbb{R}^{n}$ is a solution of \cref{GLS} if and only if
		\begin{equation}\label{GLS_criterion}
		  \begin{cases}
			  K^{\sT}(Kx-b)=\mathbf{0} , \\
			  x^{\sT}Mz = 0 , \ \ \ \forall \ z \in \mathcal{N}(K) ,
		  \end{cases}
		\end{equation}
		where $M=K^{\sT}K+L^{\sT}L$;
		\item[(2)] there exist a unique solution in $\calR(M)$, which is the minimum 2-norm solution of \cref{GLS}, given by $x=K_{L}^{\dag}g$, where $K_{L}^{\dag}:=(\rI-(L\calP_{\calN(K)})^{\dag}L)K^{\dag}$ is the weighted pseudoinverse of $K$;
		\item[(3)] define the linear operator 
		\begin{equation}
			T: \calX:=(\calR(M), \langle\cdot,\cdot\rangle_{M}) \rightarrow (\mathbb{R}^{m},\langle\cdot,\cdot\rangle_{2}), \ \ \ 
		v \mapsto Kv ,
		\end{equation}
		where $v$ and $Kv$ are column vectors under the canonical bases of $\mathbb{R}^{n}$ and $\mathbb{R}^{m}$. Then the minimum $\|\cdot\|_{\calX}$-norm solution of the least squares problem
		\begin{equation}\label{operator_ls}
			\min_{v\in\calX} \|Tv - b\|_{2}
		\end{equation} is the minimum 2-norm solution of \cref{GLS}.
	\end{enumerate}
\end{theorem}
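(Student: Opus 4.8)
The plan is to treat \cref{GLS} as a nested two-level minimization---first over the $K$-least-squares set, then over the $L$-objective---and to read off each level from its normal-equation optimality condition, finally repackaging everything in the inner-product language of part~(3). For part~(1), I would first record that the minimizers of $\|Kx-g\|_2$ form the affine set $x_{LS}+\calN(K)$, characterized by $K^{\sT}(Kx-g)=\mathbf{0}$, which is the first line of \cref{GLS_criterion}. Writing a candidate as $x=x_{LS}+z$ with $z\in\calN(K)$ and minimizing the convex quadratic $\|Lx\|_2^2$ over $z$, stationarity gives $x^{\sT}L^{\sT}Lz=0$ for all $z\in\calN(K)$. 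The one nonroutine step is the identity $x^{\sT}Mz=x^{\sT}L^{\sT}Lz$ for every $z\in\calN(K)$, which holds because $Kz=\mathbf{0}$ annihilates the $K^{\sT}K$ part of $M$; this turns the stationarity condition into the second line of \cref{GLS_criterion}. Convexity makes these conditions sufficient as well as necessary.

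For part~(2), I would determine the full solution set $\calS_{\mathrm{GLS}}$ of \cref{GLS} and show its direction space is $\calN(M)=\calN(K)\cap\calN(L)$: if $x_1,x_2$ both satisfy \cref{GLS_criterion} then $x_1-x_2\in\calN(K)$, and testing the second condition against $z=x_1-x_2$ forces $L(x_1-x_2)=\mathbf{0}$; the reverse inclusion is immediate because adding any $w\in\calN(M)\subseteq\calN(K)\cap\calN(L)$ leaves both conditions unchanged. Since $M$ is symmetric, $\calR(M)=\calN(M)^{\perp}$, so $\calS_{\mathrm{GLS}}$ meets $\calR(M)$ in exactly one point, which is forced to be the minimum $2$-norm solution. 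It then remains to verify that this point is $K_L^{\dag}g$: substituting $x_{LS}=K^{\dag}g$ and minimizing $\|L(K^{\dag}g+z)\|_2$ over $z\in\calN(K)$ yields $z=-(L\calP_{\calN(K)})^{\dag}LK^{\dag}g$, and one checks both that the resulting $x$ satisfies \cref{GLS_criterion} and that $x\perp\calN(M)$. This explicit-formula verification---tracking the range of $(L\calP_{\calN(K)})^{\dag}$ and confirming $z\in\calN(K)$---is the most computation-heavy part, and I would lean on the weighted-pseudoinverse identities in \cite{elden1982weighted,li2024new} to shorten it.

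For part~(3), the key observation is $\|v\|_{\calX}^2=v^{\sT}Mv=\|Kv\|_2^2+\|Lv\|_2^2$, together with $\calN(M)\subseteq\calN(K)$, which gives $\min_{v\in\calR(M)}\|Kv-g\|_2=\min_{v\in\mathbb{R}^n}\|Kv-g\|_2$; hence the minimizers of \cref{operator_ls} are exactly the $\calR(M)$-parts of the $K$-least-squares solutions. On that solution manifold $Kv=\calP_{\calR(K)}g$ is constant, so $\|Kv\|_2$ is fixed and minimizing $\|v\|_{\calX}$ reduces to minimizing $\|Lv\|_2$---precisely the GLS objective. To finish, I would invoke the abstract characterization that the minimum $\calX$-norm solution of \cref{operator_ls} is the unique minimizer orthogonal, in $\langle\cdot,\cdot\rangle_{M}$, to $\calN(T)=\calR(M)\cap\calN(K)$; the minimum $2$-norm GLS solution from part~(2) lies in $\calR(M)$ and, by the second line of \cref{GLS_criterion}, satisfies $x^{\sT}Mz=0$ for all $z\in\calN(K)$, so it is exactly that minimizer. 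I expect the real obstacle to be not any single computation but keeping the two competing norms straight---the operator problem minimizes the $M$-norm while \cref{GLS} minimizes the Euclidean norm---with the reconciliation resting on the constancy of $\|Kv\|_2$ over the solution manifold and the identity established in part~(1).
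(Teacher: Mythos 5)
Your proof is correct, but there is nothing in the paper to compare it against: \Cref{thm:GLS_solv} is stated without proof and simply imported from the literature (the paper points to \cite{elden1982weighted,li2024new} ``for more details''). What you have written is therefore a self-contained replacement for that citation, and the architecture is sound: the nested two-level reduction for part~(1) together with the observation that $Kz=\mathbf{0}$ lets you trade $L^{\sT}L$ for $M$ in the stationarity condition; the identification of the solution set's direction space with $\calN(M)=\calN(K)\cap\calN(L)$ (via testing the second optimality condition against $z=x_1-x_2$ to get $(x_1-x_2)^{\sT}M(x_1-x_2)=0$) and the consequent unique intersection with $\calR(M)=\calN(M)^{\perp}$ for part~(2); and the identity $\|v\|_{\calX}^{2}=\|Kv\|_{2}^{2}+\|Lv\|_{2}^{2}$ combined with the constancy of $Kv=\calP_{\calR(K)}g$ on the residual-minimizing manifold for part~(3). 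Two small points deserve attention if this were written out in full. First, in verifying the explicit formula $K_{L}^{\dag}g$, the minimizer of $\|L(K^{\dag}g+z)\|_2$ over $z\in\calN(K)$ is only determined up to $\calN(K)\cap\calN(L)$, so it is essential (as you correctly flag) to confirm that the specific choice $z=-(L\calP_{\calN(K)})^{\dag}LK^{\dag}g$ lands in $\calR(\calP_{\calN(K)}L^{\sT})\perp\calN(M)$, which together with $K^{\dag}g\in\calN(K)^{\perp}$ places the resulting $x$ in $\calR(M)$ and hence pins down the minimum $2$-norm solution rather than merely \emph{a} solution. Second, you correctly read the residual vector in \cref{GLS_criterion} and \cref{operator_ls} as $g$ rather than the literal $b$ appearing in the paper's statement, which is evidently a typo carried over from the later application with $K=C$, $g=d$.
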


The following result characterizes the structure of the solutions of \cref{LSE2}.
\medskip
\begin{theorem}\label{LSE_criterion}
	Let $G=A^{\sT}A+C^{\sT}C$. The minimum 2-norm solution of \cref{LSE2} is
	\begin{equation}\label{decomp1}
		x^{\dag} = C_{A}^{\dag}d + (\calP_{\calR(G)}-C_{A}^{\dag}C)A^{\dag}b ,
	\end{equation}
	and the set of all the solutions is $x^{\dag}+\calN(G)$.
\end{theorem}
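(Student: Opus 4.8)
The plan is to reduce the two-layer minimization in \cref{LSE2} to a single restricted least squares problem on $\calN(C)$, solve it with the weighted-pseudoinverse machinery of \Cref{thm:GLS_solv}, and then match the resulting expression to \cref{decomp1} through one pseudoinverse identity. First I would record the two subspace facts that drive everything. Since $x^{\sT}Gx=\|Ax\|_2^2+\|Cx\|_2^2$ and $G$ is symmetric positive semidefinite, $Gx=\mathbf{0}$ forces $Ax=Cx=\mathbf{0}$, so $\calN(G)=\calN(A)\cap\calN(C)$ and, taking orthogonal complements, $\calR(G)=\calR(A^{\sT})+\calR(C^{\sT})$. Next, the feasible set of \cref{LSE2} is exactly the set of least squares solutions of $Cx=d$, namely $\calS=C^{\dag}d+\calN(C)$. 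Substituting $x=C^{\dag}d+z$ with $z\in\calN(C)$ turns \cref{LSE2} into the restricted problem $\min_{z\in\calN(C)}\|Az-(b-AC^{\dag}d)\|_2$. Because $C^{\dag}d\in\calR(C^{\sT})=\calN(C)^{\perp}$ is orthogonal to every admissible $z$, we have $\|x\|_2^2=\|C^{\dag}d\|_2^2+\|z\|_2^2$, so the minimum $2$-norm solution of \cref{LSE2} corresponds to the minimum $2$-norm minimizer $z^{\dag}$ of the restricted problem.

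I would then solve the restricted problem by writing $z=\calP_{\calN(C)}w$ and setting $z^{\dag}=(A\calP_{\calN(C)})^{\dag}(b-AC^{\dag}d)$; a short check shows $z^{\dag}\in\calR(\calP_{\calN(C)}A^{\sT})\subseteq\calN(C)$ is the minimum-norm minimizer and that it is orthogonal to $\calN(A)\cap\calN(C)=\calN(G)$, hence $z^{\dag}\in\calR(G)$. Collecting terms gives
\begin{equation*}
	x^{\dag}=\big(\rI-(A\calP_{\calN(C)})^{\dag}A\big)C^{\dag}d+(A\calP_{\calN(C)})^{\dag}b ,
\end{equation*}
whose $d$-coefficient is, by the very definition of the weighted pseudoinverse in \Cref{thm:GLS_solv}(2), precisely $C_{A}^{\dag}$. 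Since $C^{\dag}d\in\calR(C^{\sT})\subseteq\calR(G)$ and $z^{\dag}\in\calR(G)$, the vector $x^{\dag}$ lies in $\calR(G)$, which will confirm it to be the minimum $2$-norm solution once membership in the solution set is clear.

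The main obstacle is to show that the $b$-coefficient above agrees with the one in \cref{decomp1}, i.e. the operator identity $(A\calP_{\calN(C)})^{\dag}=(\calP_{\calR(G)}-C_{A}^{\dag}C)A^{\dag}$. I would prove it using $C^{\dag}C=\rI-\calP_{\calN(C)}$, the fact that $\calR(A^{\sT})\subseteq\calR(G)$ gives $\calP_{\calR(G)}A^{\dag}=A^{\dag}$, and that $(A\calP_{\calN(C)})^{\dag}(A\calP_{\calN(C)})$ is the orthogonal projector onto $\calP_{\calN(C)}\calR(A^{\sT})$. After expanding $C_{A}^{\dag}C=(\rI-(A\calP_{\calN(C)})^{\dag}A)(\rI-\calP_{\calN(C)})$ and simplifying, the identity collapses to the single claim that for every $v\in\calR(A^{\sT})$ the vector $\calP_{\calN(C)}v$ is already the orthogonal projection of $v$ onto $\calP_{\calN(C)}\calR(A^{\sT})$; this holds because $v-\calP_{\calN(C)}v\in\calN(C)^{\perp}$ is orthogonal to the subspace $\calP_{\calN(C)}\calR(A^{\sT})\subseteq\calN(C)$. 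This projector bookkeeping is the delicate part, and is where I expect a computation to be easiest to get wrong.

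Finally, for the solution set I would note that over the affine set $\calS$ the objective equals the distance from $b$ to the affine set $A\calS=AC^{\dag}d+A\calN(C)$, which is attained at its unique projection point, so all minimizers share the same value of $Ax$; as they also share the same $\calN(C)$-coset, any two differ by an element of $\calN(A)\cap\calN(C)=\calN(G)$. Conversely, adding any $v\in\calN(G)$ leaves both $Ax$ and $Cx$ unchanged and hence maps solutions to solutions, so the solution set is exactly $x^{\dag}+\calN(G)$, with $x^{\dag}$ its unique member in $\calR(G)=\calN(G)^{\perp}$, i.e. the minimum $2$-norm solution.
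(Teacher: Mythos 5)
Your proof is correct, but it runs in the opposite direction to the paper's. The paper eliminates the \emph{objective}: it completes the square via $\|Ax-b\|_2^2=\|Ax-\calP_{\calR(A)}b\|_2^2+\|\calP_{\calR(A)^{\perp}}b\|_2^2$, shifts by $A^{\dag}b$ to turn \cref{LSE2} into the pure GLS problem $\min\|A\tilde{x}\|_2$ s.t. $\|C\tilde{x}-(d-CA^{\dag}b)\|_2=\min$, and reads the general solution $C_{A}^{\dag}(d-CA^{\dag}b)+z$ off \Cref{thm:GLS_solv}, so that \cref{decomp1} appears immediately after projecting onto $\calR(G)$. You instead eliminate the \emph{constraint}: parametrizing the feasible set as $C^{\dag}d+\calN(C)$ leads you naturally to the null-space-restricted problem and to the expression $x^{\dag}=(\rI-(A\calP_{\calN(C)})^{\dag}A)C^{\dag}d+(A\calP_{\calN(C)})^{\dag}b$, which is really the paper's \emph{second} decomposed form (\Cref{cor:lse}, with $(A\calP_{\calN(C)})^{\dag}$ playing the role of $A_{\calN(C)}^{\dag}$); you then have to pay for the conversion to \cref{decomp1} with the operator identity $(A\calP_{\calN(C)})^{\dag}=(\calP_{\calR(G)}-C_{A}^{\dag}C)A^{\dag}$, which is precisely the second equality of \Cref{prop:2pi}. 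The paper derives that identity \emph{from} this theorem (via \Cref{thm:x2}), whereas you prove it directly from the definition $C_{A}^{\dag}=(\rI-(A\calP_{\calN(C)})^{\dag}A)C^{\dag}$ in \Cref{thm:GLS_solv}(2) together with projector algebra, so there is no circularity; I checked the reduction and it does collapse to the single claim that $\calP_{\calN(C)}v$ is the orthogonal projection of $v\in\calR(A^{\sT})$ onto $\calP_{\calN(C)}\calR(A^{\sT})$, which your orthogonality argument settles. The trade-off: the paper's route is shorter because \Cref{thm:GLS_solv} does all the work and the target formula drops out at once; your route costs the extra projector identity but establishes \Cref{thm:x2}, \Cref{prop:2pi} and \Cref{cor:lse} essentially for free along the way, and it only invokes \Cref{thm:GLS_solv} for the definition of the weighted pseudoinverse rather than for its full solution characterization.
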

\begin{proof}
	First note that $\calN(G)=\calN(A)\cap\calN(C)$. Thus, if $x$ is a solution of \cref{LSE2}, then $\calP_{\cal{R}(G)}x$ is also a solution. Conversely, if $x\in\calR(G)$ is a solution, then $x+z$ is also a solution for any $z\in\calN(G)$. Since $\calN(G)\perp\calR(G)$, the minimum 2-norm solution of \cref{LSE2} must in $\calR(G)$. 
	
	Notice that
	\begin{equation*}
		\|Ax-b\|_{2}^{2}=\|Ax-\calP_{\calR(A)}b\|_{2}^{2}+\|\calP_{\calR(A)^{\perp}}b\|_{2}^{2}
	\end{equation*}
	and the second term is independent of $x$. Therefore, we can rewrite \cref{LSE2} as 
	\begin{equation*}
		\min_{x\in\mathbb{R}^{n}}\|A(x-A^{\dag}b)\|_2, \ \ \ \mathrm{s.t.} \ \ \
 		\|Cx-d\|_{2}=\mathrm{min}.
	\end{equation*}
	Using the transformation $\tilde{x}=x-A^{\dag}b$ and noticing that $Cx-d=C(\tilde{x}+A^{\dag}b)-d=C\tilde{x}-(d-CA^{\dag}b)$, the above problem is equivalent to
	\begin{equation}\label{GLS2}
		\min_{\tilde{x}\in\mathbb{R}^{n}}\|A\tilde{x}\|_2, \ \ \ \mathrm{s.t.} \ \ \
 		\|C\tilde{x}-(d-CA^{\dag}b)\|_{2}=\mathrm{min}.
	\end{equation}
	By \Cref{thm:GLS_solv}, the general solution of this problem is 
	\begin{equation*}
		\tilde{x}=C_{A}^{\dag}(d-CA^{\dag}b) + z, \ \ \ z\in\calN(G) .
	\end{equation*}
	Therefore, the general solution of \cref{LSE2} is 
	\begin{equation}
		x = A^{\dag}b + C_{A}^{\dag}(d-CA^{\dag}b) + z
		  = C_{A}^{\dag}d + (I_{n}-C_{A}^{\dag}C)A^{\dag}b + z, \ \ \ z\in\calN(G) .
	\end{equation}
	Note from \Cref{thm:GLS_solv} that $\calR(C_{A}^{\dag})\subseteq\calR(G)$, which indicates that the projection of the above solution onto $\calR(G)$ is $x^{\dag}$. Thus, $x^{\dag}$ is a solution of \cref{LSE2} in $\calR(G)$. 
	
	It only remains to show that there exists a unique solution of \cref{LSE2} in $\calR(G)$. To see it, notice from the above transformation that $x\in\calR(G)$ is a solution of \cref{LSE2} if and only if $x-\calP_{\calR(G)}A^{\dag}b\in\calR(G)$ is a solution of \cref{GLS2}. By \Cref{thm:GLS_solv}, \cref{GLS2} has a unique solution in $\calR(G)$, this implies that \cref{LSE2} has a unique in $\calR(G)$.
\end{proof}

Write
\begin{equation}
	x_{1}^{\dag} = C_{A}^{\dag}d, \quad x_{2}^{\dag} =  (\calP_{\calR(G)}-C_{A}^{\dag}C)A^{\dag}b.
\end{equation}
The minimum 2-norm solution of \cref{LSE2} has the decomposed-form: $x^{\dag}=x_{1}^{\dag}+x_{2}^{\dag}$. Note that $x_{1}^{\dag}$ is the minimum 2-norm solution of the GLS problem
\begin{equation}\label{GLS_CA}
	\min_{x\in\mathbb{R}^{n}}\|Ax\|_{2} \ \ \ \mathrm{s.t.} \ \ \
		  \|Cx-d\|_{2}=\mathrm{min} .
\end{equation}
By \Cref{thm:GLS_solv}, $x_{1}^{\dag}$ is also the solution of the operator-form LS problem \cref{operator_ls}, where $K=C$ and $L=A$. We can use the iterative method proposed in \cite{li2024new} to approximate $x_{1}^{\dag}$. Although the expression of $x_{2}^{\dag}$ looks relatively complicated, the following result shows that it is the minimum 2-norm solution of an LS problem with a null space constraint.
\medskip
\begin{theorem}\label{thm:x2}
	Let $x_{2}^{\dag}=(\calP_{\calR(G)}-C_{A}^{\dag}C)A^{\dag}b$, then $x_{2}^{\dag}$ is the minimum 2-norm solution of
	\begin{equation}\label{NS_LS}
		\min_{x\in\calN(C)}\|Ax-b\|_{2} .
	\end{equation}
\end{theorem}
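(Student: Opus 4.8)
The plan is to characterize the solutions of the null space constrained problem \cref{NS_LS} by first-order optimality conditions and then verify that the given $x_{2}^{\dag}$ satisfies them together with the minimum-norm requirement. Writing $N=\calN(C)$, a vector $x\in N$ minimizes $\|Ax-b\|_{2}$ over $N$ if and only if the residual is orthogonal to $AN=\{Az:z\in N\}$, i.e. $A^{\sT}(Ax-b)\in N^{\perp}=\calR(C^{\sT})$. Since $AN$ is a finite-dimensional, hence closed, subspace, a minimizer exists, and any two minimizers $x_{1},x_{2}$ satisfy $Ax_{1}=Ax_{2}=\calP_{AN}b$, so their difference lies in $\calN(A)\cap\calN(C)=\calN(G)$; consequently the solution set is $x^{*}+\calN(G)$ and the minimum $2$-norm solution is the unique solution lying in $\calR(G)=\calN(G)^{\perp}$. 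Thus it suffices to show three things about $x_{2}^{\dag}$: that $x_{2}^{\dag}\in\calN(C)$, that $A^{\sT}(Ax_{2}^{\dag}-b)\in\calR(C^{\sT})$, and that $x_{2}^{\dag}\in\calR(G)$.

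The key computational lemma I would establish first is the identity $C\,C_{A}^{\dag}=\calP_{\calR(C)}$, equivalently $C\,C_{A}^{\dag}C=C$. Using the explicit form $C_{A}^{\dag}=(\rI-(A\calP_{\calN(C)})^{\dag}A)C^{\dag}$ from \Cref{thm:GLS_solv} (with $K=C$, $L=A$), this reduces to showing $C(A\calP_{\calN(C)})^{\dag}=\mathbf{0}$, which holds because $\calR((A\calP_{\calN(C)})^{\dag})=\calR(\calP_{\calN(C)}A^{\sT})\subseteq\calN(C)$. Granting this, and writing $w=A^{\dag}b$, I would compute $Cx_{2}^{\dag}=C\calP_{\calR(G)}w-C\,C_{A}^{\dag}Cw=C\calP_{\calR(G)}w-Cw=-C\calP_{\calN(G)}w$; since $\calN(G)=\calN(A)\cap\calN(C)\subseteq\calN(C)$, this vanishes, giving $x_{2}^{\dag}\in\calN(C)$. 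The membership $x_{2}^{\dag}\in\calR(G)$ follows at once from $\calP_{\calR(G)}w\in\calR(G)$ together with $\calR(C_{A}^{\dag})\subseteq\calR(G)$, already noted in the proof of \Cref{LSE_criterion}.

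For the optimality condition I would again use $w=A^{\dag}b$ and the two facts $A\calP_{\calN(G)}=\mathbf{0}$ (since $\calN(G)\subseteq\calN(A)$) and $A^{\sT}\calP_{\calR(A)^{\perp}}=\mathbf{0}$. These give $A\calP_{\calR(G)}w=Aw=\calP_{\calR(A)}b$, hence $Ax_{2}^{\dag}-b=-\calP_{\calR(A)^{\perp}}b-A\,C_{A}^{\dag}Cw$ and therefore $A^{\sT}(Ax_{2}^{\dag}-b)=-A^{\sT}A\,C_{A}^{\dag}Cw$. It then remains to show $A^{\sT}A\,C_{A}^{\dag}Cw\in\calR(C^{\sT})$. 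Setting $u=C_{A}^{\dag}(Cw)$, which by \Cref{thm:GLS_solv} is the minimum $2$-norm solution of the GLS problem with $K=C$, $L=A$ and right-hand side $Cw$, the optimality system \cref{GLS_criterion} yields $u^{\sT}Gz=0$ for all $z\in\calN(C)$; since $Gz=A^{\sT}Az$ whenever $Cz=\mathbf{0}$, this is exactly $A^{\sT}Au\perp\calN(C)$, i.e. $A^{\sT}A\,C_{A}^{\dag}Cw\in\calR(C^{\sT})$. Combining the three verified properties with the uniqueness established in the first paragraph identifies $x_{2}^{\dag}$ as the minimum $2$-norm solution of \cref{NS_LS}.

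I expect the main obstacle to be the clean handling of the weighted pseudoinverse $C_{A}^{\dag}$: both the range identity $C\,C_{A}^{\dag}=\calP_{\calR(C)}$ and the extraction of the condition $A^{\sT}Au\in\calR(C^{\sT})$ from \cref{GLS_criterion} hinge on correctly exploiting the structure of $C_{A}^{\dag}$ and the simplification $Gz=A^{\sT}Az$ on $\calN(C)$. The remaining manipulations are routine projector algebra.
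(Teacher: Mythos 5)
Your proof is correct and follows essentially the same route as the paper: you verify the same three conditions (membership in $\calN(C)$, the normal-equation condition projected onto $\calN(C)$, and membership in $\calR(G)$), using the same key identity $C\,C_{A}^{\dag}C=C$ and the same extraction of $A^{\sT}Au\perp\calN(C)$ from the GLS optimality system. The only notable difference is that you derive $C\,C_{A}^{\dag}=\calP_{\calR(C)}$ directly from the explicit formula for the weighted pseudoinverse, whereas the paper cites this identity from an external reference, and you state the first-order characterization of \cref{NS_LS} directly rather than through the operator-adjoint formulation of \Cref{lem:NSLS}; both are equivalent.
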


The following lemma is needed for the proof.
\medskip
\begin{lemma}\label{lem:NSLS}
	A vector $x\in\calN(C)$ is the minimum 2-norm solution of \cref{NS_LS} if and only if
	\begin{equation*}\label{NSLS_criterion}
		\begin{cases}
			\calP_{\calN(C)}(A^{\sT}(Ax-b))=\mathbf{0} , \\
			x \perp \calN(A)\cap\calN(C) .
		\end{cases}
	  \end{equation*}
\end{lemma}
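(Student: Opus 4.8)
The plan is to treat the feasible set $\mathcal{M}:=\calN(C)$ as a fixed subspace of $\mathbb{R}^{n}$ and to analyze \cref{NS_LS} as an ordinary least squares problem restricted to $\mathcal{M}$. The two conditions in the claimed criterion correspond to two separate facts: the first condition is the projected normal equation characterizing \emph{which} vectors in $\mathcal{M}$ minimize $\|Ax-b\|_2$, while the second condition singles out the \emph{minimum-norm} minimizer among them. I would establish these two facts in turn and then combine them to obtain the stated equivalence.

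First I would characterize the minimizers. Since $x\mapsto\|Ax-b\|_{2}^{2}$ is convex and $\mathcal{M}$ is a subspace, a vector $x\in\mathcal{M}$ minimizes $\|Ax-b\|_2$ over $\mathcal{M}$ if and only if the directional derivative vanishes along every feasible direction, i.e. $\langle Ay,\,Ax-b\rangle=0$ for all $y\in\mathcal{M}$. Rewriting this as $\langle y,\,A^{\sT}(Ax-b)\rangle=0$ for all $y\in\mathcal{M}$ shows it is equivalent to $A^{\sT}(Ax-b)\perp\mathcal{M}$, that is, $\calP_{\calN(C)}(A^{\sT}(Ax-b))=\mathbf{0}$, which is precisely the first condition. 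This yields necessity and sufficiency for $x$ to be \emph{a} minimizer.

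Next I would describe the whole solution set and extract its minimum-norm element. If $x_{0}\in\mathcal{M}$ is one minimizer, then any other minimizer $x\in\mathcal{M}$ satisfies $Ax=Ax_{0}$, because the orthogonal projection of $b$ onto the subspace $A\mathcal{M}$ is unique; hence $x-x_{0}\in\mathcal{M}\cap\calN(A)=\calN(A)\cap\calN(C)$, and conversely adding any element of $\calN(A)\cap\calN(C)$ to $x_{0}$ keeps it feasible and optimal. Thus the minimizer set is the affine subspace $x_{0}+(\calN(A)\cap\calN(C))$. The minimum-norm element of an affine subspace is the unique point orthogonal to its direction space: by the Pythagorean identity, for $w\in\calN(A)\cap\calN(C)$ one has $\|x+w\|_{2}^{2}=\|x\|_{2}^{2}+\|w\|_{2}^{2}$ whenever $x\perp\calN(A)\cap\calN(C)$, so a minimizer $x$ has least norm exactly when $x\perp\calN(A)\cap\calN(C)$, which is the second condition.

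Combining the two steps yields the lemma: a vector $x\in\calN(C)$ satisfying the first condition is a minimizer of \cref{NS_LS}, and such a minimizer is the minimum $2$-norm solution if and only if it additionally satisfies $x\perp\calN(A)\cap\calN(C)$. The only point requiring care is identifying the direction space of the minimizer set as $\calN(A)\cap\calN(C)$ rather than $\calN(A)$ alone, since the feasibility constraint $x\in\calN(C)$ must be carried through the argument; once this is pinned down, both implications follow immediately. I do not expect a serious obstacle, as this is the standard variational analysis of a least squares problem restricted to a subspace, requiring only convexity of the objective and the orthogonal splitting $\mathbb{R}^{n}=(\calN(A)\cap\calN(C))\oplus(\calN(A)\cap\calN(C))^{\perp}$.
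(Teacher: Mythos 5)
Your proof is correct. The underlying characterization you arrive at is the same as the paper's (projected normal equation on $\calN(C)$ plus orthogonality to $\calN(A)\cap\calN(C)$), but your route to it is genuinely more elementary: you work directly with the convexity of $x\mapsto\|Ax-b\|_2^2$ and directional derivatives along the subspace $\calN(C)$ to get the first condition, then describe the full minimizer set as the affine subspace $x_0+(\calN(A)\cap\calN(C))$ and extract its minimum-norm element by the Pythagorean identity. The paper instead introduces the restricted operator $\calA:(\calN(C),\langle\cdot,\cdot\rangle_2)\to\mathbb{R}^m$, $v\mapsto Av$, cites the standard Hilbert-space characterization of the minimum-norm least squares solution ($\calA^{*}(\calA x-b)=\mathbf{0}$ and $x\perp_{\calX}\calN(\calA)$), and then computes $\calA^{*}v=\calP_{\calN(C)}A^{\sT}v$ and $\calN(\calA)=\calN(A)\cap\calN(C)$. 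What the paper's framing buys is not a shorter proof of this lemma but consistency with the rest of the development: the operator $\calA$ defined here is reused later to show that \textsf{NSR-GKB} is an operator-type Golub--Kahan bidiagonalization (\Cref{prop:GKB-op}) and to derive the stopping criterion via $\calA^{*}r_k$. Your argument is self-contained and requires no appeal to the operator machinery; the one step needing care, correctly identifying the direction space of the minimizer set as $\calN(A)\cap\calN(C)$ rather than $\calN(A)$, you handle properly by carrying the feasibility constraint $x\in\calN(C)$ through the argument.
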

\begin{proof}
	Define the linear operator
	\begin{equation}\label{op_A}
		\calA: (\calN(C), \langle \cdot, \cdot \rangle_{2}) \rightarrow (\mathbb{R}^{m}, \langle \cdot, \cdot \rangle_{2}),  \ \ \ 
		v \mapsto Av,
	\end{equation}
	where $v$ and $Av$ are column vectors under the canonical bases of $\mathbb{R}^{n}$ and $\mathbb{R}^{m}$. Notice that $\calX:=(\calN(C), \langle \cdot, \cdot \rangle_{2})$ is a finite dimensional Hilbert space. Therefore, there exist a unique minimum $\calX$-norm solution of $\min_{v\in\calX}\|\calA v-b\|_{2}$, which is the minimum 2-norm solution of \cref{NS_LS}, and $x\in\calN(C)$ is the minimum $\calX$-norm solution if and only if
	\begin{equation*}
		\calA^{*}(\calA x-b)=\mathbf{0}, \ \ \ 
		x \perp_{\calX} \calN(\calA)
	\end{equation*}
	where the orthogonal relation $\perp_{\calX}$ in $\calX$ is the 2-orthogonal relation in $\calN(C)$, and the linear operator $\calA^{*}: (\mathbb{R}^{m}, \langle \cdot, \cdot \rangle_{2}) \rightarrow (\calN(C), \langle \cdot, \cdot \rangle_{2})$ is the adjoint of $\calA$ defined by the relation $\langle \calA v, u \rangle_{2}=\langle v, \calA^{*}u \rangle_{2}$ for any $v\in\calN(C)$ and $u\in\mathbb{R}^{m}$. It is easy to verify that $\calA^{*}v=\calP_{\calN(C)}A^{\sT}v$ under the canonical bases. Thus, $\calA^{*}(\calA x-b)=\mathbf{0}$ is equivalent to $\calP_{\calN(C)}(A^{\sT}(Ax-b))=\mathbf{0}$. Since $\calN(\calA)=\{x\in\calN(C): Ax=0\}=\calN(A)\cap\calN(C)$, it follows that $x \perp_{\calX} \calN(\calA)$ is equivalent to $x \perp \calN(A)\cap\calN(C)$.
\end{proof} 

Now we can prove \Cref{lem:NSLS}.
\begin{proof}[Proof of \Cref{lem:NSLS}]
	The proof contains three steps. 

	Step 1: prove $x_{2}^{\dag}\in\calN(C)$. Using \cite[Theorem 3.7]{li2024new}, we have the relation $CC_{A}^{\dag}C=C$. It follows that 
	\begin{align*}
		Cx_{2}^{\dag} 
		= (C\calP_{\calR(G)} - CC_{A}^{\dag}C)A^{\dag}b 
		= C(\rI-\calP_{\calR(G)})A^{\dag}b
		= C\calP_{\calN(G)}A^{\dag}b = \mathbf{0},
	\end{align*}
	where we have used $\calN(G)\subseteq \calN(C)$.

	Step 2: prove $\calP_{\calN(C)}(A^{\sT}(Ax_{2}^{\dag}-b))=\mathbf{0}$. First we have
	\begin{align*}
		A^{\sT}(Ax_{2}^{\dag}-b)
		&= A^{\sT} [A\calP_{\calR(G)}A^{\dag}b-b-AC_{A}^{\dag}CA^{\dag}b] \\
		&= A^{\sT}(AA^{\dag}-\rI)b - A^{\sT}AC_{A}^{\dag}CA^{\dag}b \\
		&= - A^{\sT}AC_{A}^{\dag}CA^{\dag}b,
	\end{align*}
	where we have used $A\calP_{\calR(G)}x=Ax-A\calP_{\calN(G)}x=Ax$ for any $x\in\mathbb{R}^{n}$, and $\rI-AA^{\dag}=\calP_{\calR(A)^{\perp}}=\calP_{\calN(A^{\sT})}$. Let $w=C_{A}^{\dag}CA^{\dag}b$. By \Cref{LSE_criterion}, $w$ is the minimum 2-norm solution of 
	\begin{equation*}
		\min\|Ax\|_{2} \ \ \ \mathrm{s.t.} \ \ \
		\|Cx-CA^{\dag}b\|_{2}=\min .
	\end{equation*}
	Using \Cref{LSE_criterion} again, it follows that $w^{\sT}Gz=0$ for any $z\in\calN(C)$, which is just
	\begin{equation*}
		w^{\sT}(A^{\sT}A+C^{\sT}C)z = (A^{\sT}Aw)^{\sT}z = 0
	\end{equation*}
	for any $z\in\calN(C)$, which means that $A^{\sT}Aw\perp \calN(C)$. This proves $\calP_{\calN(C)}A^{\sT}Aw=0$, which is the desired result.

	Step 3: prove $x_{2}^{\dag} \perp \calN(A)\cap\calN(C)$. This is obvious by noticing that $x_{2}^{\dag}\in\calR(G)$ and $\calR(G)\perp \calN(A)\cap\calN(C)$. 
\end{proof}

From the above proof, we know that $x_{2}^{\dag}$ is the minimum $\calX$-norm solution of operator-form LS problem $\min_{\calX}\|\calA x-b\|_2$ with $\calA$ defined in \cref{op_A}. Therefore, we have $x_{2}^{\dag}=\calA^{\dag}b =:A_{\calN(C)}^{\dag}b$. Note that $A_{\calN(C)}^{\dag}$ is essentially the matrix form of $\calA^{\dag}$ under the canonical bases of $\mathbb{R}^{n}$ and $\mathbb{R}^{m}$, which depends both on $A$ and $\calN(C)$. 

Based on \Cref{thm:x2}, we will propose an iterative method for the LS problem \cref{NS_LS} to approximate $x_{2}^{\dag}$. Before this, let us investigate several properties of the matrix $A_{\calN(C)}^{\dag}$, which will be used to derive another decomposed-form solution of \cref{LSE2}.
\medskip
\begin{proposition}\label{prop:2pi}
	The following two equalities hold: 
	\begin{equation}
		\begin{cases}
			(\rI-A_{\calN(C)}^{\dag}A)C^{\dag} = C_{A}^{\dag} \\
			(\calP_{\calR(C)}-C_{A}^{\dag}C)A^{\dag} = A_{\calN(C)}^{\dag} .
		\end{cases}
	\end{equation}
\end{proposition}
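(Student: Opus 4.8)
The plan is to prove each identity by showing that its right-hand side satisfies the optimality conditions that uniquely characterise the object on its left-hand side, invoking \Cref{thm:GLS_solv}(1) for the weighted pseudoinverse $C_{A}^{\dag}$ and \Cref{lem:NSLS} for the restricted pseudoinverse $A_{\calN(C)}^{\dag}$. The two identities are dual: the second expresses $A_{\calN(C)}^{\dag}$ through $C_{A}^{\dag}$, the first expresses $C_{A}^{\dag}$ through $A_{\calN(C)}^{\dag}$. The second identity in fact requires no new work, since applying its right-hand side to an arbitrary $b$ produces exactly $x_{2}^{\dag}=(\calP_{\calR(G)}-C_{A}^{\dag}C)A^{\dag}b$, which \Cref{thm:x2} identifies with $A_{\calN(C)}^{\dag}b$ (here the projection is understood onto $\calR(G)$, as in \Cref{LSE_criterion}). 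Thus the genuinely new content is the first identity, and I would concentrate on it.

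For the first identity I would first establish the key fact $A_{\calN(C)}^{\dag}=(A\calP_{\calN(C)})^{\dag}$, namely that the pseudoinverse of the operator $\calA$ of \cref{op_A} coincides, in matrix form, with the Moore--Penrose pseudoinverse of the matrix $A\calP_{\calN(C)}$. This is checked against the three conditions of \Cref{lem:NSLS}: the range of $(A\calP_{\calN(C)})^{\dag}$ lies in $\calR(\calP_{\calN(C)}A^{\sT})\subseteq\calN(C)$, so $(A\calP_{\calN(C)})^{\dag}b\in\calN(C)$; its minimum-norm property forces orthogonality to $\calN(A\calP_{\calN(C)})$, hence to $\calN(A)\cap\calN(C)$ since the latter is contained in the former; and the normal equations of $A\calP_{\calN(C)}$ restrict on $\calN(C)$ to $\calP_{\calN(C)}(A^{\sT}(Ax-b))=\mathbf{0}$. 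With this in hand the first identity is immediate: substituting $A_{\calN(C)}^{\dag}=(A\calP_{\calN(C)})^{\dag}$ into its left-hand side turns it into $(\rI-(A\calP_{\calN(C)})^{\dag}A)C^{\dag}$, which is precisely the closed form of $C_{A}^{\dag}$ given by \Cref{thm:GLS_solv}(2) with $K=C$ and $L=A$.

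The step demanding the most care is this identification $A_{\calN(C)}^{\dag}=(A\calP_{\calN(C)})^{\dag}$, because the two pseudoinverses are a priori defined on different domains: $\calA^{\dag}$ inverts the operator $\calA$ living only on $(\calN(C),\langle\cdot,\cdot\rangle_{2})$, whereas $(A\calP_{\calN(C)})^{\dag}$ inverts a matrix acting on all of $\mathbb{R}^{n}$, and one must check that the extra directions $\calR(C^{\sT})$ created by extending $A|_{\calN(C)}$ to $A\calP_{\calN(C)}$ are absorbed harmlessly into the null space so that the two minimum-norm least-squares solutions agree. The remaining bookkeeping---tracking the ambient inner product in each orthogonality and confirming that the resulting $C_{A}^{\dag}d$ lands in $\calR(G)$ so that \Cref{thm:GLS_solv}(2) selects the minimum-norm rather than an arbitrary solution---is routine. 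Should one prefer to bypass the key fact, the first identity can also be verified directly: writing $y=(\rI-A_{\calN(C)}^{\dag}A)C^{\dag}d$, the condition $C^{\sT}(Cy-d)=\mathbf{0}$ holds because $A_{\calN(C)}^{\dag}AC^{\dag}d\in\calN(C)$ is killed by $C$ while $CC^{\dag}d-d\in\calN(C^{\sT})$, and the weighted condition $y^{\sT}Gz=0$ for $z\in\calN(C)$ collapses to $Ay\perp A(\calN(C))$, which is true because $AA_{\calN(C)}^{\dag}=\calP_{A(\calN(C))}$ makes $Ay$ the residual of projecting $AC^{\dag}d$ onto $A(\calN(C))$.
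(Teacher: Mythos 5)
Your proposal is correct, and for the second identity it coincides with the paper (a direct appeal to \Cref{thm:x2}; you are also right that $\calP_{\calR(C)}$ in the statement should read $\calP_{\calR(G)}$). For the first identity, however, you take a genuinely different route. The paper argues by a change of variables: for arbitrary $y$ it rewrites the GLS problem defining $C_{A}^{\dag}y$ via $\bar{x}=x-C^{\dag}y$, turning it into $\min\|A\bar{x}+AC^{\dag}y\|_{2}$ subject to $\|C\bar{x}\|_{2}=\min$, whose minimum-norm solution is $-A_{\calN(C)}^{\dag}AC^{\dag}y$, and then recovers the minimum-norm property of $C^{\dag}y+\bar{x}^{\dag}$ from the orthogonality of both terms to $\calN(A)\cap\calN(C)$. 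Your primary route instead establishes the identification $A_{\calN(C)}^{\dag}=(A\calP_{\calN(C)})^{\dag}$ and then reads the first identity directly off the closed form $K_{L}^{\dag}=(\rI-(L\calP_{\calN(K)})^{\dag}L)K^{\dag}$ in \Cref{thm:GLS_solv}(2) with $K=C$, $L=A$; your checks that the extra directions $\calR(C^{\sT})$ fall into $\calN(A\calP_{\calN(C)})$ and are therefore harmless for the minimum-norm selection are exactly what is needed there. Your fallback route (verifying the two optimality conditions of \Cref{thm:GLS_criterion} directly for $y=(\rI-A_{\calN(C)}^{\dag}A)C^{\dag}d$, using $CA_{\calN(C)}^{\dag}=\mathbf{0}$ and $AA_{\calN(C)}^{\dag}=\calP_{A(\calN(C))}$) is also sound, provided you additionally confirm $y\perp\calN(G)$, which you flag and which follows from $C^{\dag}d\in\calR(C^{\sT})$ and \Cref{lem:NSLS}. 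The trade-off: the paper's substitution argument is self-contained at the level of optimization problems, whereas your route yields the explicit matrix formula $A_{\calN(C)}^{\dag}=(A\calP_{\calN(C)})^{\dag}$ as a reusable by-product and reduces the identity to a one-line consequence of the definition of the weighted pseudoinverse.
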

\begin{proof}
	The second equality is directly derived from \Cref{thm:x2}. Now we prove the first equality.
	By \Cref{thm:GLS_solv}, for any $y\in\mathbb{R}^{n}$, $C_{A}^{\dag}y$ is the 2-minimum solution of 
	\begin{equation*}
		\min\|Ax\|_{2} \ \ \ \mathrm{s.t.} \ \ \ \|Cx-y\|_{2}=\min ,
	\end{equation*}
	which has the same solution as 
	\begin{equation*}
		\min\|Ax\|_{2} \ \ \ \mathrm{s.t.} \ \ \ \|C(x-C^{\dag}y)\|_{2}=\min .
	\end{equation*}
	Let $\bar{x}=x-C^{\dag}y$. The above problem becomes
	\begin{equation*}
		\min\|A\bar{x}+AC^{\dag}y\|_{2}  \ \ \ \mathrm{s.t.} \ \ \ \|C\bar{x}\|_{2}=\min,
	\end{equation*}
	which has the minimum 2-norm solution 
	$\bar{x}^{\dag}=-A_{\calN(C)}^{\dag}AC^{\dag}y$, and a general solution is $\bar{x}=\bar{x}^{\dag}+z$ with $z\in\calN(A)\cap\calN(C)$. Therefore a general solution of the original problem is 
	\[x = C^{\dag}y + \bar{x}^{\dag}y+z .\]
	Note that $\calP_{\calN(C)}C^{\dag}y=(\rI-C^{\dag}C)C^{\dag}y=\mathbf{0}$. Thus, $C^{\dag}y\perp \calN(C)$ and $C^{\dag}y\perp z$. Combining with $\bar{x}_{\dag}\perp z$ we have $C^{\dag}y + \bar{x}^{\dag}\perp z$. Therefore, $C^{\dag}y + \bar{x}^{\dag}y=(\rI-A_{\calN(C)}^{\dag}A)C^{\dag}y$ is the minimum 2-norm solution  of the original problem. Since $y$ is arbitrary, we finally get $(\rI-A_{\calN(C)}^{\dag}A)C^{\dag} = C_{A}^{\dag}$.
\end{proof}

From the above result, we obtain the following decomposed-form solution of \cref{LSE2}.
\medskip
\begin{corollary}\label{cor:lse}
	The minimum 2-norm solution of \cref{LSE2} has the form
	\begin{equation}\label{decomp2}
		x^{\dag} = C^{\dag}d + A_{\calN(C)}^{\dag}(b-AC^{\dag}d)
	\end{equation}
\end{corollary}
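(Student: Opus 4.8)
The plan is to show that the right-hand side of \cref{decomp2} coincides with the minimum 2-norm solution $x^{\dag}=C_{A}^{\dag}d+(\calP_{\calR(G)}-C_{A}^{\dag}C)A^{\dag}b$ already established in \Cref{LSE_criterion}. Since \Cref{LSE_criterion} guarantees that this vector is the unique minimum 2-norm solution of \cref{LSE2}, it suffices to verify a purely algebraic identity between the two expressions; no optimality or uniqueness argument needs to be repeated. The engine of the proof is \Cref{prop:2pi}, whose first identity $(\rI-A_{\calN(C)}^{\dag}A)C^{\dag}=C_{A}^{\dag}$ is precisely what converts the leading $C^{\dag}d$ term of \cref{decomp2} into the $C_{A}^{\dag}d$ term of \cref{decomp1}.

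Concretely, I would expand the right-hand side of \cref{decomp2} by distributing $A_{\calN(C)}^{\dag}$ and then regroup the two terms multiplied by $d$:
\begin{align*}
    C^{\dag}d + A_{\calN(C)}^{\dag}(b-AC^{\dag}d)
    &= (\rI-A_{\calN(C)}^{\dag}A)C^{\dag}d + A_{\calN(C)}^{\dag}b \\
    &= C_{A}^{\dag}d + A_{\calN(C)}^{\dag}b,
\end{align*}
where the second equality applies the first identity of \Cref{prop:2pi}. To finish, I would invoke the identification $A_{\calN(C)}^{\dag}b=x_{2}^{\dag}=(\calP_{\calR(G)}-C_{A}^{\dag}C)A^{\dag}b$, which was recorded immediately after the proof of \Cref{thm:x2}. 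Substituting this gives exactly $x_{1}^{\dag}+x_{2}^{\dag}$, i.e. the expression of \cref{decomp1}, and hence the right-hand side of \cref{decomp2} equals the minimum 2-norm solution of \cref{LSE2}.

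I do not expect a genuine obstacle in this argument: the corollary is essentially a restatement of \Cref{LSE_criterion} in a form more convenient for computation, trading the weighted pseudoinverse $C_{A}^{\dag}$ in the coefficient of $d$ for the ordinary pseudoinverse $C^{\dag}$, at the cost of the correction term $-A_{\calN(C)}^{\dag}AC^{\dag}d$. All the real work has already been carried out in \Cref{prop:2pi}. The only points requiring a moment of care are bookkeeping: confirming that the $A_{\calN(C)}^{\dag}b$ produced by the expansion is indeed the previously defined second component $x_{2}^{\dag}$, and arranging the grouping of the $d$-terms so that \Cref{prop:2pi} applies verbatim, rather than in some transpose or adjoint variant.
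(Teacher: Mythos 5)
Your argument is correct and is essentially the paper's own proof run in reverse: the paper starts from $x^{\dag}=C_{A}^{\dag}d+A_{\calN(C)}^{\dag}b$ and expands $C_{A}^{\dag}=(\rI-A_{\calN(C)}^{\dag}A)C^{\dag}$ via the first identity of \Cref{prop:2pi} to reach \cref{decomp2}, whereas you expand \cref{decomp2} and collapse it back to \cref{decomp1} using the same identity and the same identification $A_{\calN(C)}^{\dag}b=x_{2}^{\dag}$. The two are the same short algebraic verification, so there is nothing to add.
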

\begin{proof}
	Using \Cref{thm:GLS_solv} and \Cref{prop:2pi}, we have
	\begin{align*}
		x^{\dag} &=  C_{A}^{\dag}d + A_{\calN(C)}^{\dag}b
		= (\rI-A_{\calN(C)}^{\dag}A)C^{\dag}d+ A_{\calN(C)}^{\dag}b \\
		&= A_{\calN(C)}^{\dag}(b-AC^{\dag}d)+C^{\dag}d,
	\end{align*}
	which is the desired result.
\end{proof}

By \Cref{LSE_criterion} and \Cref{cor:lse}, we can give two approaches for computing $x^{\dag}$.
\paragraph{The first approach.}
\begin{enumerate}
	\item[(1)] Solve the GLS problem \cref{GLS_CA} to get $x_{1}^{\dag}=C_{A}^{\dag}d$;
	\item[(2)] Solve the LS problem \cref{NS_LS} to get $x_{2}^{\dag}=A_{\calN(C)}^{\dag}b$;
	\item[(3)] Compute $x^{\dag}=x_{1}^{\dag}+x_{2}^{\dag}$.
\end{enumerate}

\paragraph{The second approach.}
\begin{enumerate}
	\item[(1)] Solve the LS problem $\min_{x}\|Cx-d\|_2$ to get the minimum 2-norm solution $\tilde{x}_{1}^{\dag}=C^{\dag}d$;
	\item[(2)] Let $\tilde{b}=b-A\tilde{x}_{1}^{\dag}$. Solve the LS problem $\min_{x\in\calN(C)}\|Ax-\tilde{b}\|_{2}$ to get the minimum 2-norm solution $\tilde{x}_{2}^{\dag}=A_{\calN(C)}^{\dag}\tilde{b}$;
	\item[(3)] Compute $x^{\dag}=\tilde{x}_{1}^{\dag}+\tilde{x}_{2}^{\dag}$.
\end{enumerate}

In the next section, we will propose two Krylov subspace based iterative methods for solving \cref{LSE2}, which correspond to the above two approaches, respectively.

\section{Krylov iterative methods for the LSE problem}\label{sec4}
From the previous section, we find that for solving the LSE problem, we need to compute $C_{A}^{\dag}$ or $A_{\calN(C)}^{\dag}$. We first propose the iterative methods for such computations based on the Krylov subspace, then we give two iterative algorithms for the LSE problem.

\subsection{Iterative method for computing $C_{A}^{\dag}$}

Based on \Cref{thm:GLS_solv}, the author in \cite{li2024new} proposes a Krylov iterative method for approximating $C_{A}^{\dag}d$ for a vector $d\in\mathbb{R}^{p}$. The idea is to apply the Golub-Kahan bidiagonalization (GKB) to solve the operator-form LS problem 
\begin{equation}\label{LS_CA}
	\min_{x\in\calX}\|Tx-d\|_{2},
\end{equation}
where $\calX=(\calR(G),\langle\cdot,\cdot\rangle_{G})$ and $T: \calX\rightarrow (\mathbb{R}^{m},\langle\cdot,\cdot\rangle_{2}), \ x\mapsto Cx$ under the canonical bases. Applying the GKB to $\{T,d\}$ we get the recursive relations 
\begin{equation}\label{GKB_op}
	\begin{cases}
		\beta_1 u_1 = d  \\
		\alpha_{i}v_i = T^{*}u_i -\beta_i v_{i-1}  \\
		\beta_{i+1}u_{i+1} = T v_{i} - \alpha_i u_i,
	\end{cases}
\end{equation}
where $T^{*}: (\mathbb{R}^{m},\langle\cdot,\cdot\rangle_{2}) \rightarrow \calX$ is the adjoint operator of $T$ defined by the relation $\langle Tx, y \rangle_{2}=\langle x, T^{*}y \rangle_{G}$ for any $x\in\calX$ and $y\in\mathbb{R}^{m}$. It has been shown in \cite{li2024new} that the matrix form of $T^{*}$ is $G^{\dag}C$. The positive scalars $\alpha_i$ and $\beta_i$ are computed such that $\|v_i\|_{\calX}=\|u_{i}\|_{2}=1$. Note that $v_{0}:=\mathbf{0}$ for the initial step. 

After $k$ steps, the above GKB process generates two Krylov subspaces and projects the LS problem \cref{LS_CA} onto the Krylov subspaces to get a $k$-dimensional LS problem. The solution of the $k$-dimensional LS problem can be updated step by step from the previous one, which converges to $C_{A}^{\dag}d$ as $k$ increases. This leads to the following \Cref{alg:glsqr} for iteratively approximating $C_{A}^{\dag}d$. Please refer to \cite{li2024new} for more details.

\begin{algorithm}[htb]
	\caption{Generalized LSQR (\textsf{gLSQR}) for computing $C_{A}^{\dag}d$}\label{alg:glsqr}
	\begin{algorithmic}[1]
		\Require $A\in\mathbb{R}^{m\times n}$, $C\in\mathbb{R}^{p\times n}$, $d\in\mathbb{R}^{p}$
		\State Compute $\beta_1=\|d\|_2$, \ $u_1=d/\beta_1$ $\beta_1\tilde{u}_1=b$
		\State Compute $s=G^{\dag}C^{\sT}u_{1}$, \ $\alpha_{1}=(s^{\sT}Gs)^{1/2}$, \ $v_{1}=s/\alpha_1$  \Comment{$G=A^{\sT}A+C^{\sT}C$}
    	\State Set $x_0=\mathbf{0}$, $w_1=v_1$, $\bar{\phi}_{1}=\beta_1$, $\bar{\rho}_1=\alpha_1$
		\For {$i=1,2,\dots$ until convergence,}
		\State $r=Cv_{i}-\alpha_{i}u_{i}$
		\State $\beta_{1+1}=\|r\|_{2}$, \ $u_{i+1}=r/\beta_{i+1}$
		\State $s=G^{\dag}C^{\sT}u_{i+1}-\beta_{i+1}v_{i}$
		\State $\alpha_{i+1}=(s^{\sT}Gs)^{1/2}$, \ $v_{i+1}=s/\alpha_{i+1}$
		\State $\rho_{i}=(\bar{\rho}_{i}^{2}+\beta_{i+1}^{2})^{1/2}$
		\State $c_{i}=\bar{\rho}_{i}/\rho_{i}$
    \State $s_{i}=\beta_{i+1}/\rho_{i}$
		\State$\theta_{i+1}=s_{i}\alpha_{i+1}$ 
    \State $\bar{\rho}_{i+1}=-c_{i}\alpha_{i+1}$
		\State $\phi_{i}=c_{i}\bar{\phi}_{i}$ 
    \State $\bar{\phi}_{i+1}=s_{i}\bar{\phi}_{i}$
		\State $x_{i}=x_{i-1}+(\phi_{i}/\rho_{i})w_{i} $
		\State $w_{i+1}=v_{i+1}-(\theta_{i+1}/\rho_{i})w_{i}$
		\EndFor
	\Ensure Approximation to $C_{A}^{\dag}d$
	\end{algorithmic}
\end{algorithm}

In \Cref{alg:glsqr}, the main computational bottleneck is the need to compute $G^{\dag}(C^{\sT}u_i)$ at each iteration. For large-scale matrices, it is generally impractical to obtain $G^{\dag}$ directly. In this case, using the relation
\begin{equation}\
	G^{\dag}(C^{\sT}u_i) = \argmin_{x\in\mathbb{R}^{n}}\|Gx-C^{\sT}u_i\|_2,
\end{equation}
we can compute $G^{\dag}(C^{\sT}u_i)$ by iteratively solving the above LS problem. Furthermore, by noticing that $G^{\dag}(C^{\sT}u_i)$ is the minimum 2-norm solution of the LS problem
\begin{equation}\label{ls1_gkb}
	\min_{x\in\mathbb{R}^{n}}\left\|\begin{pmatrix}
		C \\ A
	\end{pmatrix}x-\begin{pmatrix}
		u_i \\ \mathbf{0}
	\end{pmatrix}\right\|_{2} ,
\end{equation}
we can use the LSQR algorithm \cite{Paige1982} to approximate $G^{\dag}(C^{\sT}u_i)$ without explicitly forming $G$. If $\begin{pmatrix}
	C \\ A
\end{pmatrix}$ is sparse and its sparse QR factorization is not difficult to compute, then we can compute the solution of \cref{ls1_gkb} directly.

\subsection{Iterative method for computing $A_{\calN(C)}^{\dag}$}
Now we consider how to design a GKB based method to approximate $A_{\calN(C)}b$ for a $b\in\mathbb{R}^{m}$. First, suppose an orthonormal basis of the null space $\calN(C)$ is $\{w_{1},\dots,w_{t}\}$. Let $W_t=(w_{1},\dots,w_{t})\in\mathbb{R}^{n\times t}$. Using \Cref{thm:x2}, if follows that $A_{\calN(C)}^{\dag}b=W_{t}f$, where $f\in\mathbb{R}^{t}$ is the minimum 2-norm solution of the LS problem
\begin{equation}\label{LS_AW}
	\min_{f\in\mathbb{R}^{t}}\|(AW_{t})f-b\|_2
\end{equation}
To solve \cref{LS_AW} iteratively, we apply the GKB to $\{AW_{t}, b\}$, which leads to the following recursive relations:
\begin{equation}\label{GKB_op1}
	\begin{cases}
		\delta_{1}p_1 = b  \\
		\gamma_{i}\tilde{q}_i = (AW_{t})^{\sT}p_i -\delta_i \tilde{q}_{i-1}  \\
		\delta_{i+1}p_{i+1} = (AW_{t})\tilde{q}_{i} - \gamma_i p_i,
	\end{cases}
\end{equation}
where the positive scalars are computed such that $\|p_i\|_{2}=\|q_{i}\|_{2}=1$, and we set $q_{0}:=\mathbf{0}$ for the initial step. 

Using the property of GKB, after $k$ steps, it generates two groups of 2-orthonormal vectors $\{p_{i}\}_{i=1}^{k+1}$ and $\{\tilde{q}_i\}_{i=1}^{k+1}$. Then we can approximate the solution of \cref{LS_AW} in the subspace $\mathrm{span}\{\tilde{q}_i\}_{i=1}^{k}$ as $k$ grows from $1$ to $t$. This approach is equivalent to applying the standard LSQR algorithm to \cref{LS_AW}. Therefore, to get a good approximation to $A_{\calN(C)}^{\dag}b$, we can search a solution of \cref{NS_LS} in the subspace $W_{t}\cdot\mathrm{span}\{\tilde{q}_i\}_{i=1}^{k}=\mathrm{span}\{W_{t}\tilde{q}_i\}_{i=1}^{k}$ at the $k$-th iteration. Let $q_{i}=W_{t}\tilde{q}_i$. Note that $\calP_{\calN(C)}=W_{t}W_{t}^{\sT}$. From the recursions \cref{GKB_op1}, we get 
\begin{equation}\label{GKB_op2}
	\begin{cases}
		\delta_{1}p_1 = b  \\
		\gamma_{i}q_i = \calP_{\calN(C)}A^{\sT}p_i -\delta_i q_{i-1}  \\
		\delta_{i+1}p_{i+1} = Aq_{i} - \gamma_i p_i,
	\end{cases}
\end{equation}
where $\|q_i\|_2=1$. The following result demonstrates that this iterative process is essentially an operator-type GKB.
\medskip
\begin{proposition}\label{prop:GKB-op}
	Let the linear operator defined as \cref{op_A}. Then the iterative process \cref{GKB_op2} is equivalent to the GKB applied to $\{\calA,b\}$.
\end{proposition}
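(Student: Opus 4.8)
The plan is to write down explicitly the operator-form GKB applied to the pair $\{\calA, b\}$, following the same template as the recursion \cref{GKB_op}, and then check line-by-line that each step collapses onto the corresponding line of \cref{GKB_op2} once the concrete matrix forms of $\calA$ and its adjoint $\calA^{*}$ are inserted. Instantiating the template for $\{\calA, b\}$ yields
\begin{equation*}
\begin{cases}
\delta_1 p_1 = b, \\
\gamma_i q_i = \calA^{*} p_i - \delta_i q_{i-1}, \\
\delta_{i+1} p_{i+1} = \calA q_i - \gamma_i p_i,
\end{cases}
\end{equation*}
where the left vectors $p_i$ belong to $(\mathbb{R}^m, \langle\cdot,\cdot\rangle_2)$, the right vectors $q_i$ belong to $\calX=(\calN(C), \langle\cdot,\cdot\rangle_2)$, and the positive scalars $\delta_i, \gamma_i$ are chosen so that $\|p_i\|_2=1$ and $\|q_i\|_{\calX}=1$.

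The first step is to replace $\calA q_i$ and $\calA^{*} p_i$ by their canonical-basis matrix expressions. Since $\calA$ defined in \cref{op_A} is merely the restriction of $v\mapsto Av$ to $\calN(C)$, we have $\calA q_i = A q_i$ for every $q_i\in\calN(C)$. For the adjoint I would invoke the identity $\calA^{*} v = \calP_{\calN(C)}A^{\sT}v$, which was already established in the proof of \Cref{lem:NSLS}. Substituting these two facts turns the middle and bottom recursions into $\gamma_i q_i = \calP_{\calN(C)}A^{\sT}p_i - \delta_i q_{i-1}$ and $\delta_{i+1}p_{i+1}=Aq_i-\gamma_i p_i$, which are exactly the second and third lines of \cref{GKB_op2}; the initialization $\delta_1 p_1=b$ is common to both. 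A short inductive remark keeps the scheme consistent: because $\calP_{\calN(C)}A^{\sT}p_i\in\calN(C)$ and $q_{i-1}\in\calN(C)$, each produced $q_i$ again lies in $\calN(C)$, so the recursion never leaves the Hilbert space $\calX$.

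The final step is to reconcile the normalization constants so that the two schemes generate identical scalars. By the definition in \cref{op_A}, the inner product on the domain space $\calX$ is the ambient Euclidean inner product $\langle\cdot,\cdot\rangle_2$ restricted to $\calN(C)$, whence $\|q_i\|_{\calX}=\|q_i\|_2$; thus the requirement $\|q_i\|_{\calX}=1$ in the operator GKB coincides with the condition $\|q_i\|_2=1$ imposed in \cref{GKB_op2}. Together with $\|p_i\|_2=1$ on the left side, this forces the positive scalars $\delta_i,\gamma_i$ of the two processes to agree at every step, so they produce the same vectors $\{p_i\}$, $\{q_i\}$ and the same bidiagonal coefficients, which is the claimed equivalence.

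I do not expect a genuine obstacle, since the essential computation---the matrix form of the adjoint $\calA^{*}$---was already carried out for \Cref{lem:NSLS}. The one point demanding care is the bookkeeping of the two different inner products: one must keep clear that $\calA^{*}$ is taken with respect to the $\calX$-inner product on the range side and the Euclidean inner product on the domain side, and then observe that on $\calN(C)$ the $\calX$-norm is simply the $2$-norm, so that no hidden weighting enters the scalars. Once this is made explicit, the proof reduces to a direct term-by-term identification of the two recursions.
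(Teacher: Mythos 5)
Your proposal is correct and follows essentially the same route as the paper: both identify the canonical-basis matrix form of the adjoint, $\calA^{*}v=\calP_{\calN(C)}A^{\sT}v$, established in the proof of \Cref{lem:NSLS}, and then match the recursions of \cref{GKB_op2} term by term with the operator-type GKB applied to $\{\calA,b\}$. Your write-up is somewhat more explicit than the paper's (in checking that the iterates remain in $\calN(C)$ and that the $\calX$-norm reduces to the $2$-norm, so the scalars agree), but the underlying argument is identical.
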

\begin{proof}
	From the proof of \Cref{lem:NSLS} we know that $\calA^{*}v=\calP_{\calN(C)}A^{\sT}v$ for any $v\in\mathbb{R}^{m}$ under the canonical bases. Therefore, the second recursive relation in \cref{GKB_op2} is equivalent to $\gamma_{i}q_i = \calA^{*}p_i -\delta_i q_{i-1}$. Now we can find that \cref{GKB_op2} is just the recursions of the operator-type GKB applied to $\{\calA,b\}$ under the canonical bases.
\end{proof}

\Cref{prop:GKB-op} implies that the outputs of the above iterative process do not depend on the choice of 2-orthonormal basis of $\calN(C)$, i.e. it will generate the same vectors $\{p_i,q_i\}$ and scalars $\{\gamma_i,\delta_i\}$ whenever the 2-orthonormal basis $\{w_{i}\}_{i=1}^{t}$ for $\calN(C)$ is used. 

Now we can give the practical computational approach of this iterative process. Since all the constructed vectors $q_i$ are restricted in $\calN(C)$, we name this process the \textit{Null Space Restricted GKB} (\textsf{NSR-GKB}). The pseudocode of \textsf{NSR-GKB} is shown in \Cref{alg:gGKB}.

\begin{algorithm}[htb]
	\caption{Null Space Restricted GKB (\textsf{NSR-GKB})}\label{alg:gGKB}
	\begin{algorithmic}[1]
		\Require $A\in\mathbb{R}^{m\times n}$, $C\in\mathbb{R}^{p\times n}$, $b\in\mathbb{R}^{m}$
		\State Compute $\delta_1=\|b\|_2$, \ $p_1=b/\delta_1$, 
		\State Compute $s=\calP_{\calN(C)}A^{\sT}p_1$, \ $\gamma_1=\|s\|_2$, \ $q_1=s/\gamma_1$
		\For {$i=1,2,\dots,k,$}
		\State $r=Aq_i-\gamma_i p_i$, 
		\State $\delta_{i+1}=\|r\|_2$, \ $p_{i+1}=r/\delta_{i+1}$
		\State $s=\calP_{\calN(C)}A^{\sT}p_{i+1}-\delta_{i+1}q_i$ 
		\State $\gamma_{i+1}=\|s\|_2$, \ $q_{i+1} = s/\gamma_{i+1}$
		\EndFor
		\Ensure $\{\gamma_i, \delta_i\}_{i=1}^{k+1}$, \ $\{p_i, q_i\}_{i=1}^{k+1}$  
	\end{algorithmic}	
\end{algorithm}

Note that $\calP_{\calN(C)}=\rI_{n}-C^{\dag}C$. In the computation of \textsf{NSR-GKB}, the orthonormal basis of $\calN(C)$ is not required, where instead at each step we need to compute $C^{\dag}CA^{\sT}p_{i}$, which is the most costly part. Write $\tilde{v}_i=CA^{\sT}p_{i}$, which is easy to compute. To get a good approximate to $C^{\dag}\tilde{v}_i$, we can iteratively compute the minimum 2-norm solution of the LS problem
\begin{equation}\label{ls2_gkb}
	\min_{x\in\mathbb{R}^{n}}\|Cx-\tilde{v}_i\|_2 ,
\end{equation}
which can be done efficiently by using the LSQR algorithm. In this case, \textsf{NSR-GKB} has the nested inner-outer iteration structure. If $C$ has a special structure such that its rank-revealing QR factorization is relatively easy to compute, we can first get the QR factorization of $C$ and then compute $C^{\dag}\tilde{v}_i$ directly.

The following result characterizes the structures of the two Krylov subspaces generated by \textsf{NSR-GKB}.
\medskip
\begin{proposition}\label{prop:gGKB1}
	For the \textsf{NSR-GKB} process, the generated vectors $\{q_i\}_{i=1}^{k}\subset\calN(C)$ constitute a $2$-orthonormal basis of the Krylov subspace 
	\begin{equation}\label{krylov1}
		\mathcal{K}_k(\calP_{\calN(C)}A^{\sT}A, \calP_{\calN(C)}A^{\sT}b) = \mathrm{span}\{(\calP_{\calN(C)}A^{\sT}A)^{i}\calP_{\calN(C)}A^{\sT}b\}_{i=0}^{k-1},
	\end{equation}
	and $\{p_i\}_{i=1}^{k}\subset\mathbb{R}^{m}$ constitute a $2$-orthonormal basis of the Krylov subspace 
	\begin{equation}\label{krylov2}
		\mathcal{K}_k(A\calP_{\calN(C)}A^{\sT}, b) = \mathrm{span}\{(A\calP_{\calN(C)}A^{\sT})^{i}b\}_{i=0}^{k-1}.
	\end{equation}
\end{proposition}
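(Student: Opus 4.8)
The plan is to lean on \Cref{prop:GKB-op}, which already identifies the NSR-GKB recursions \cref{GKB_op2} with the operator-form GKB applied to $\{\calA, b\}$. Once this identification is in place, the statement reduces to the standard Krylov-subspace characterization of Golub-Kahan bidiagonalization: after $k$ steps the left vectors span $\mathcal{K}_k(\calA\calA^{*}, b)$ and the right vectors span $\mathcal{K}_k(\calA^{*}\calA, \calA^{*}b)$, with orthonormality inherited from the normalizations $\|p_i\|_2=\|q_i\|_2=1$ and the bidiagonal three-term recurrences. I would therefore first recall this characterization (as established for operator-form GKB in the cited references), and then translate the two abstract Krylov subspaces into matrix form.

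For the translation, I would use the identity $\calA^{*}v=\calP_{\calN(C)}A^{\sT}v$ obtained in the proof of \Cref{lem:NSLS}. This gives the matrix forms $\calA^{*}\calA=\calP_{\calN(C)}A^{\sT}A$ and $\calA\calA^{*}=A\calP_{\calN(C)}A^{\sT}$ under the canonical bases, together with the starting vector $\calA^{*}b=\calP_{\calN(C)}A^{\sT}b$. Substituting these into $\mathcal{K}_k(\calA^{*}\calA,\calA^{*}b)$ and $\mathcal{K}_k(\calA\calA^{*},b)$ yields exactly \cref{krylov1} and \cref{krylov2}. A small point to check is that the operator-Krylov subspace generated inside $\calX=(\calN(C),\langle\cdot,\cdot\rangle_{2})$ coincides with the matrix-Krylov subspace computed in $\mathbb{R}^{n}$: this holds because the starting vector $\calP_{\calN(C)}A^{\sT}b$ lies in $\calN(C)$ and every application of $\calP_{\calN(C)}A^{\sT}A$ lands back in $\calN(C)$, so the two subspaces share the same spanning vectors. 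Since the $\calX$-inner product is the restriction of the ordinary $2$-inner product to $\calN(C)$, the $\calX$-orthonormality of $\{q_i\}$ is precisely $2$-orthonormality, and $\{q_i\}\subset\calN(C)$ as required.

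Should a self-contained argument be preferred to citing the abstract GKB theory, I would instead run an induction on $k$ directly on the recursions \cref{GKB_op2}. The base case is immediate, since $p_1\propto b$ and $q_1\propto\calP_{\calN(C)}A^{\sT}b$. For the inductive step the essential algebraic fact is the shift identity
\begin{equation*}
	(A\calP_{\calN(C)}A^{\sT})^{\ell}b = A(\calP_{\calN(C)}A^{\sT}A)^{\ell-1}\calP_{\calN(C)}A^{\sT}b, \qquad \ell\ge 1,
\end{equation*}
together with its companion $\calP_{\calN(C)}A^{\sT}(A\calP_{\calN(C)}A^{\sT})^{\ell}b=(\calP_{\calN(C)}A^{\sT}A)^{\ell}\calP_{\calN(C)}A^{\sT}b$, both of which follow by repeatedly inserting $\calP_{\calN(C)}^{2}=\calP_{\calN(C)}$. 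Applying $A$ to $q_i$ in \cref{GKB_op2} and using the first identity keeps $p_{i+1}$ inside $\mathcal{K}_{i+1}(A\calP_{\calN(C)}A^{\sT},b)$; applying $\calP_{\calN(C)}A^{\sT}$ to $p_{i+1}$ and using the second keeps $q_{i+1}$ inside $\mathcal{K}_{i+1}(\calP_{\calN(C)}A^{\sT}A,\calP_{\calN(C)}A^{\sT}b)$.

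The main obstacle is not the containment, which the shift identity makes routine, but rather establishing that each newly generated vector enlarges the span, so that $k$ vectors span a $k$-dimensional Krylov subspace (assuming no breakdown). This requires tracking that $q_{i+1}$ and $p_{i+1}$ carry a nonzero component along the leading directions $(\calP_{\calN(C)}A^{\sT}A)^{i}\calP_{\calN(C)}A^{\sT}b$ and $(A\calP_{\calN(C)}A^{\sT})^{i}b$, which is exactly what the positivity of the bidiagonal scalars $\gamma_i,\delta_i$ guarantees. This is the point where orthonormality and Krylov structure must be carried together through the induction, and it is where I would concentrate the care.
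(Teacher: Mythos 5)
Your main argument is essentially the paper's own first proof: invoke \Cref{prop:GKB-op} to identify \textsf{NSR-GKB} with the operator-form GKB of $\{\calA,b\}$, cite the standard Krylov characterization of GKB, and translate via $\calA^{*}v=\calP_{\calN(C)}A^{\sT}v$ to obtain \cref{krylov1} and \cref{krylov2}; your explicit check that the operator Krylov subspace in $\calX$ agrees with the matrix one, and that $\calX$-orthonormality is $2$-orthonormality, is correct. The paper's second proof instead works through the factored form $AW_{t}$ with a $2$-orthonormal null-space basis rather than your proposed direct induction, but your primary route is sound and matches the paper.
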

\begin{proof}
	To get more insights into the \textsf{NSR-GKB} process, here we give two proofs.

	The first proof is based on the property of GKB for linear compact operators \cite{caruso2019convergence}. By \Cref{prop:GKB-op}, the \textsf{NSR-GKB} is essentially the operator-type GKB of $\{\calA,b\}$, where the underlying Hilbert spaces are $\calX:=(\calN(C), \langle \cdot, \cdot \rangle_{2})$ and $\mathbb{R}^{m}$.
	Therefore, under the canonical bases, the generated vectors satisfy $q_i\in \calN(C)$ and $p_i\in\mathbb{R}^{m}$, and $\{q_i\}_{i=1}^{k}$ and $\{q_i\}_{i=1}^{k}$ are $2$-orthonormal bases of the Krylov subspaces $\mathcal{K}_k(\calA^{*}\calA,\calA^{*}\calP_{\calR(P)}b)$ and $\mathcal{K}_k(\calA\calA^{*},\calP_{\calR(P)}b)$, respectively. Since $\calA^{*}y=\calP_{\calN(C)}A^{\sT}y$ for any $y\in\mathbb{R}^{m}$, we have
	\begin{equation*}
		(\calA^{*}\calA)^{i}\calA^{*}b = (\calP_{\calN(C)}A^{\sT}A)^{i}\calP_{\calN(C)}A^{\sT}b,
	\end{equation*}
	and
	\begin{equation*}
		(\calA\calA^{*})^{i}\calP_{\calR(P)}b = (A\calP_{\calN(C)}A^{\sT})^{i}b .
	\end{equation*}
	The desired result immediately follows.

	The second proof uses the recursions \cref{GKB_op1}, which is based on a 2-orthonormal basis $\{w_{i}\}_{i=1}^{t}$ for $\calN(C)$. The standard GKB process of $\{(AW_{t}), b\}$ with recursions \cref{GKB_op1} generates two 2-orthonormal basis $\{\tilde{q}_{i}\}_{i=1}^{k}$ and $\{p_{i}\}_{i=1}^{k}$ for the two Krylov subspaces
	\begin{align*}
		& \mathcal{K}_k((AW_{t})^{\sT}AW_{t}, (AW_{t})^{\sT}b) = \mathrm{span}\{((AW_{t})^{\sT}AW_{t})^{i}(AW_{t})^{\sT}b\}_{i=0}^{k-1},  \\
		& \mathcal{K}_k(AW_{t}(AW_{t})^{\sT}, b) = \mathrm{span}\{(AW_{t}(AW_{t})^{\sT})^{i}b\}_{i=0}^{k-1},
	\end{align*}
	respectively. Using $q_{i}=W_{t}\tilde{q}_{i}$, $W_{t}W_{t}^{\sT}=\calP_{\calN(C)}$ and noticing that 
	\begin{align*}
		& \ \ \ \ W_{t} ((AW_{t})^{\sT}AW_{t})^{i}(AW_{t})^{\sT}b \\
		&= W_{t}(W_{t}^{\sT}A^{\sT}AW_{t})^{i}W_{t}^{\sT}A^{\sT}b 
		= (W_{t}W_{t}^{\sT}A^{\sT}A)^{i}W_{t}W_{t}^{\sT}A^{\sT}b \\
		&= (\calP_{\calN(C)}A^{\sT}A)^{i}\calP_{\calN(C)}A^{\sT}b,
	\end{align*}
	we immediately obtain \cref{krylov1}. Similarly, we can obtain \cref{krylov2}.
\end{proof}

Since the dimensions of $(\calN(C), \langle \cdot, \cdot \rangle_{2})$ and $\mathbb{R}^{m}$ are $\mathrm{dim}(\calN(C))$ and $m$, respectively, \Cref{prop:gGKB1} implies that \textsf{NSR-GKB} will eventually terminate at most $\min\{\mathrm{dim}(\calN(C)),m\}$ steps. The ``terminate step" of \textsf{NSR-GKB} is defined as $k_t=\min\{k: \alpha_{k+1}\beta_{k+1}=0\}$, which means that $\gamma_{i}$ or $\delta_i$ equals zero at the current step and thereby the Krylov subspace can not expand any longer. 
Suppose \textsf{NSR-GKB} does not terminate before the $k$-th iteration, that is, $\gamma_{i}\delta_{i}\neq 0$ for $1\leq i \leq k$. Then the $k$-step \textsf{NSR-GKB} process generates two $2$-orthonormal matrices $Q_{k}=(q_1,\dots,q_{k})\in \mathbb{R}^{n\times k}$ and $P_{k}=(p_1,\dots,p_{k})\in \mathbb{R}^{m\times k}$ that satisfy the following matrix-form relations:
\begin{equation}{\label{eq:GKB_matForm}} 
	\begin{cases}
		\beta_1Q_{k+1}e_{1} = b  \\
		AP_k = Q_{k+1}B_k  \\
		\calP_{\calN(C)}A^{\sT}Q_{k+1} = P_kB_{k}^{\sT}+\gamma_{k+1}q_{k+1}e_{k+1}^\sT ,
	\end{cases}
\end{equation}
where $e_1$ and $e_{k+1}$ are the first and $(k+1)$-th columns of the identity matrix of order $k+1$, and the bidiagonal matrix
\begin{equation}
	B_{k}
	=\begin{pmatrix}
		\gamma_{1} & & & \\
		\delta_{2} &\gamma_{2} & & \\
		&\delta_{3} &\ddots & \\
		& &\ddots &\gamma_{k} \\
		& & &\delta_{k+1}
		\end{pmatrix}\in  \mathbb{R}^{(k+1)\times k}
\end{equation}
has full column rank. We remark that it may happen that $\delta_{k+1}=0$, meaning that \textsf{NSR-GKB} terminates at the $k$-th step with $q_{k+1}=\mathbf{0}$. 

Now we seek the approximation to $A_{\calN(C)}^{\dag}b$ by computing the solution of \cref{NS_LS} in the Krylov subspace $\mathrm{span}\{Q_{k}\}\subset\calN(C)$. For any $x\in\mathrm{span}\{Q_{k}\}$, let $x=Q_{k}y$ with $y\in\mathbb{R}^{k}$. Using the relations \cref{eq:GKB_matForm}, we get 
\begin{align*}
   \min_{x=Q_{k}y}\|Ax-b\|_2
  = \min_{y\in\mathbb{R}^{n}}\|P_{k+1}(B_{k}y-\beta_1 e_{1})\|_2
  = \min_{y\in\mathbb{R}^{n}}\|B_{k}y-\beta_1 e_{1}\|_2 .
\end{align*}
Therefore, at the $k$-th iteration, we only need to solve the following $k$-dimensional subproblem to get the approximation:
\begin{equation}\label{x_k}
	x_k = Q_{k}y_k, \ \ \ 
	y_k=\argmin_{y\in\mathbb{R}^{k}}\|B_{k}y-\beta_1 e_{1}\|_2 .
\end{equation}
Note that before \textsf{NSR-GKB} terminates, $B_{k}$ has full column rank and the LS problem $\argmin_{y}\|B_{k}y-\beta_1 e_{1}\|_2$ always has the unique solution $y_k=B_k^{\dag}\beta e_{1}$. 
As the iteration proceeds, $x_k$ will gradually approximate the true solution of \cref{NS_LS}. The following result shows that at the terminate step, we will get the exact solution of \cref{NS_LS}.
\medskip
\begin{theorem}\label{thm:naive}
  Suppose \textsf{NSR-GKB} terminates at step $k_t$. Then the iterative solution $x_{k_t}=A_{\calN(C)}^{\dag}b$, which is the the exact minimum $2$-norm solution of \cref{NS_LS}.
\end{theorem}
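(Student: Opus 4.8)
The plan is to verify that $x_{k_t}$ satisfies the two optimality conditions of \Cref{lem:NSLS}. Since every generated vector $q_i$ lies in $\calN(C)$, the iterate $x_{k_t}=Q_{k_t}y_{k_t}$ automatically belongs to $\calN(C)$, so it remains only to check that $\calP_{\calN(C)}(A^{\sT}(Ax_{k_t}-b))=\mathbf{0}$ and that $x_{k_t}\perp\calN(A)\cap\calN(C)$. Throughout I will write $\calA$ for the operator in \cref{op_A}, so that $\calA^{*}=\calP_{\calN(C)}A^{\sT}$, and I will use the matrix relations \cref{eq:GKB_matForm} together with the Krylov characterization of \Cref{prop:gGKB1}.

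The orthogonality condition is the easy one. By \Cref{prop:gGKB1}, $\mathrm{span}\{Q_{k_t}\}$ equals the Krylov subspace $\calK_{k_t}(\calP_{\calN(C)}A^{\sT}A,\calP_{\calN(C)}A^{\sT}b)$, so each element, and in particular each column $q_i$, has the form $\calP_{\calN(C)}A^{\sT}w$ for some $w$. For any $z\in\calN(A)\cap\calN(C)$ I then compute $\langle q_i,z\rangle_2=\langle A^{\sT}w,\calP_{\calN(C)}z\rangle_2=\langle A^{\sT}w,z\rangle_2=\langle w,Az\rangle_2=0$, using $z\in\calN(C)$ to drop the projection and $z\in\calN(A)$ at the end. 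Hence $x_{k_t}\in\mathrm{span}\{Q_{k_t}\}$ is $2$-orthogonal to $\calN(A)\cap\calN(C)$.

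The substantive step is a residual identity. Set $r_k=b-Ax_k$ and $t_k=\delta_1 e_1-B_k y_k$. Using $b=\delta_1 p_1$ and $AQ_k=P_{k+1}B_k$ from \cref{eq:GKB_matForm}, I get $r_k=P_{k+1}t_k$, while the normal equations for the subproblem \cref{x_k} give $B_k^{\sT}t_k=\mathbf{0}$. Expanding $\calA^{*}r_k=\sum_i (t_k)_i\,\calA^{*}p_i$ with the adjoint recursion $\calA^{*}p_i=\gamma_i q_i+\delta_i q_{i-1}$ and regrouping by $q_j$, the coefficient of $q_j$ for $j\le k$ is exactly the $j$-th entry of $B_k^{\sT}t_k$, hence zero; only the last term survives, so that
\begin{equation*}
	\calP_{\calN(C)}(A^{\sT}(Ax_k-b)) = -\calA^{*}r_k = -\gamma_{k+1}(t_k)_{k+1}\,q_{k+1}.
\end{equation*}
At the terminate step $k_t$ I split into the two defining cases. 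If $\gamma_{k_t+1}=0$, the right-hand side vanishes immediately. If instead $\delta_{k_t+1}=0$, then the last row of $B_{k_t}$ is zero, forcing $(t_{k_t})_{k_t+1}=-\delta_{k_t+1}(y_{k_t})_{k_t}=0$; in fact in this case the reduced square system is solved exactly and $r_{k_t}=\mathbf{0}$. Either way $\calP_{\calN(C)}(A^{\sT}(Ax_{k_t}-b))=\mathbf{0}$.

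With both conditions established, \Cref{lem:NSLS} shows $x_{k_t}$ is the minimum $2$-norm solution of \cref{NS_LS}, which by \Cref{thm:x2} is precisely $A_{\calN(C)}^{\dag}b$. I expect the residual identity to be the main obstacle: it requires carefully matching the bidiagonal bookkeeping of the adjoint recursion against the normal equations $B_k^{\sT}t_k=\mathbf{0}$, and then treating the two termination cases separately (a vanishing $\gamma_{k_t+1}$ versus a vanishing $\delta_{k_t+1}$, the latter forcing a zero residual). As a cross-check, one may instead invoke \Cref{prop:GKB-op} to identify the process with standard \textsf{LSQR} applied to $\min_f\|(AW_t)f-b\|_2$ and use $q_i=W_t\tilde q_i$ to transfer the classical finite-termination property of \textsf{LSQR} back to $x_{k_t}=W_t(AW_t)^{\dag}b=A_{\calN(C)}^{\dag}b$.
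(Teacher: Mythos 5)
Your proof is correct and takes essentially the same route as the paper: both verify the two optimality conditions of \Cref{lem:NSLS}, using the normal equations $B_{k_t}^{\sT}(B_{k_t}y_{k_t}-\delta_{1}e_{1})=\mathbf{0}$ together with the termination condition $\gamma_{k_t+1}\delta_{k_t+1}=0$ for the first, and the Krylov characterization of \Cref{prop:gGKB1} for the second; your component-wise regrouping and two-case split are only cosmetic variants of the paper's single matrix identity. You also implicitly work with the corrected relations $AQ_{k}=P_{k+1}B_{k}$ and $\calP_{\calN(C)}A^{\sT}P_{k+1}=Q_{k}B_{k}^{\sT}+\gamma_{k+1}q_{k+1}e_{k+1}^{\sT}$, in which the roles of $P_{k}$ and $Q_{k}$ are swapped relative to the paper's \cref{eq:GKB_matForm} (a typo the paper's own proof inherits).
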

\begin{proof}
	Since $x_{k_t}\in\mathrm{span}{Q_k}\subset\calN(C)$, we only need to verify that $x_{k_t}$ satisfies the two relations of \Cref{NSLS_criterion}.

	Write $x_{k_t}$ as $x_{k_t}=Q_{k_t}y_{k_t}$ and use the relation $Ax_{k_t}-b =Q_{k_t+1}(B_{k_t}y_t-\beta_1 e_{1})$. We have 
	\begin{align*}
		\calP_{\calN(C)}A^{\sT}(Ax_{k_t}-b) 
		&= \calP_{\calN(C)}A^{\sT}Q_{k_t+1}(B_{k_t}y_{k_t}-\delta_{1}e_1) \\
		&= (P_{k_t}B_{k_t}^{\sT}+\gamma_{k_t+1}q_{k_t+1}e_{k+1}^{\sT})(B_{k_t}y_{k_t}-\delta_{1}e_1) \\
		&= P_{k_t}(B_{k_t}^{\sT}B_{k_t}y_{k_t}- B_{k_t}^{\sT}\delta_{1}e_{1}) + \gamma_{k_t+1}\delta_{k_t+1}v_{k_t+1}e_{k_t}^{\sT}y_{k_t} \\
		&= \gamma_{k_t+1}\delta_{k_t+1}v_{k_t+1}e_{k_t}^{\sT}y_{k_t} \\
		&= \mathbf{0},
	  \end{align*}
	  since $\gamma_{k_t+1}\delta_{k_t+1}=0$ and $B_{k_t}^{\sT}B_{k_t}y_{k_t}=B_{k_t}^{\sT}\delta_{1}e_{1}$ due to $y_{k_t}=\argmin_{y}\|B_{k_t}y-\beta_{1}e_1\|_2$. This verifies the first relation of \Cref{NSLS_criterion}. By \Cref{prop:gGKB1} we have $x_{k_t}\in\calP_{\calN(C)}\calR(A^{\sT})=\calP_{\calN(C)}\calN(A)^{\perp}$. Let $x_{k_t}=\calP_{\calN(C)}w$ with $w\in\calN(A)^{\perp}$. For any $y\in\calN(A)\cap\calN(C)$, we have
	\begin{equation*}
		\langle x_{k_t}, y \rangle_{2} = \langle \calP_{\calN(C)}w, y \rangle_{2}
		= \langle w, \calP_{\calN(C)}y \rangle_{2} = \langle w, y \rangle_{2} = 0.
	\end{equation*}
	This verifies the second relation of \Cref{NSLS_criterion}.
\end{proof}

In the practical computation, we do not need to compute $B_{k}^{\dag}$ to get $x_{k}$ at each iteration. Instead, by exploiting the bidiagonal structure of $B_{k}$, we can design a recursive procedure to update $x_{k}$ based on the Givens QR factorization of $B_k$. This procedure follows a very similar approach proposed in \cite[Section 4.1]{Paige1982}, and we omit the derivation. Combining the \textsf{NSR-GKB} process and the update procedure, we get the following \Cref{alg:alg2} for approximating $A_{\calN(C)}^{\dag}b$. This algorithm is named the \textit{Null Space Restricted LSQR}(\textsf{NSR-LSQR}). We remark that for notational simplicity, some notations in \Cref{alg:alg2} are the same as those in \Cref{alg:glsqr}, but the readers can easily find the differences between them.

\begin{algorithm}[htb]
	\caption{Null Space Restricted LSQR (\textsf{NSR-LSQR}) for computing $A_{\calN(C)}^{\dag}b$}\label{alg:alg2}
	\begin{algorithmic}[1]
		\Require $A\in\mathbb{R}^{m\times n}$, $C\in\mathbb{R}^{p\times n}$, $b\in\mathbb{R}^{m}$
		\State \textbf{(Initialization)}
		\State Compute $\delta_1p_1=b$, $\gamma_1q_1=\calP_{\calN(C)}A^{\sT}p_1$
    	\State Set $x_0=\mathbf{0}$, $z_1=q_1$, $\bar{\phi}_{1}=\delta_1$, $\bar{\rho}_1=\gamma_1$
		\For {$i=1,2,\dots$ until convergence,}
    \State \textbf{(Applying the NSR-GKB process)}
		\State $\delta_{i+1}p_{i+1}=Aq_{i}-\gamma_ip_{i}$ 
		\State $\gamma_{i+1}q_{i+1}=\calP_{\calN(C)}A^{\sT}p_{i+1}-\delta_{i+1}q_{i}$
    \State \textbf{(Applying the Givens QR factorization to $B_k$)}
		\State $\rho_{i}=(\bar{\rho}_{i}^{2}+\delta_{i+1}^{2})^{1/2}$
		\State $c_{i}=\bar{\rho}_{i}/\rho_{i}$
    \State $s_{i}=\beta_{i+1}/\rho_{i}$
		\State$\theta_{i+1}=s_{i}\gamma_{i+1}$ 
    \State $\bar{\rho}_{i+1}=-c_{i}\gamma_{i+1}$
		\State $\phi_{i}=c_{i}\bar{\phi}_{i}$ 
    \State $\bar{\phi}_{i+1}=s_{i}\bar{\phi}_{i}$
    \State \textbf{(Updating the solution)}
		\State $x_{i}=x_{i-1}+(\phi_{i}/\rho_{i})z_{i} $
		\State $w_{i+1}=v_{i+1}-(\theta_{i+1}/\rho_{i})z_{i}$
		\EndFor
	\Ensure Approximation to $A_{\calN(C)}^{\dag}b$
	\end{algorithmic}
\end{algorithm}

To check the convergence condition of \textsf{NSR-LSQR}, here we give a stopping criterion. The idea is based on \Cref{thm:x2} and \Cref{prop:GKB-op}, which implies that \textsf{NSR-LSQR} is a Krylov subspace iterative method applied to the operator-type LS problem $\min_{x\in\calX}\|\calA x-b\|_2$. For the standard LS problem $\min_{x}\|Ax-b\|_2$, a commonly used stopping criterion is $\frac{\|A^{\sT}r_{k}\|_2}{\|A\|_{2}\|b_k\|_2}$, where $r_{k}=\|Ax_k-b\|_2$ (here we use $r_k$ and $x_k$ to denote the quantities computed by LSQR without ambiguity); see \cite[Section 6]{Paige1982}. Similarly, for the \textsf{NSR-LSQR} algorithm, we use the following relative residual norm for the stopping criterion:
\begin{equation}\label{stop}
	\frac{\|\calA^{*}r_{k}\|_2}{\|\calA\|\|b\|_2} = \frac{\|\calP_{\calN(C)}A^{\sT}r_{k}\|_2}{\|b\|_2} \leq \mathtt{tol},
\end{equation}
where $r_k=Ax_{k}-b$, and $\|\calA\|:=\max\limits_{v\in\calN(C) \atop v\neq\mathbf{0}}\frac{\|\calA v\|_{2}}{\|v\|_{2}}$. From the proof of \Cref{thm:naive} we know that $\calA^{*}r_k$ would be zero when the exact solution is obtained. Furthermore, at each iteration, we also have
\begin{align*}
	\|\calA^{*}r_{k}\|_2 = \|\calP_{\calN(C)}A^{\sT}(Ax_k-b)\|_2
	= \|\gamma_{k+1}\delta_{k+1}v_{k+1}e_{k}^{\sT}y_{k}\|_2
	= \gamma_{k+1}\delta_{k+1}|e_{k}^{\sT}y_{k}| .
\end{align*}
This means that $\|\calA^{*}r_{k}\|_2$ can be quickly obtained with almost no additional cost. The following result provides an approach for estimating $\calA$.
\medskip
\begin{proposition}\label{prop:Anorm}
	Suppose $\{w_{i}\}_{i=1}^{t}$ is an arbitrary 2-orthonormal basis of $\calN(C)$ and $W_{t}=(w_{1},\dots,w_{t})\in\mathbb{R}^{n\times t}$. Then it holds that
	\begin{equation}
		\|\calA\| = \sigma_{\max}(AW_{t}),
	\end{equation}
	which is the largest singular value of $AW_{t}$.
\end{proposition}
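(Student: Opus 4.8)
The plan is to reduce the operator norm of $\calA$ to the ordinary matrix $2$-norm of $AW_{t}$ by exploiting the fact that $W_{t}$ provides an isometric parametrization of the Hilbert space $\calX=(\calN(C),\langle\cdot,\cdot\rangle_{2})$. The key observation is that, since the columns of $W_{t}$ form a $2$-orthonormal basis of $\calN(C)$, every $v\in\calN(C)$ admits a unique representation $v=W_{t}f$ with $f\in\mathbb{R}^{t}$, and this correspondence is a norm-preserving bijection between $\calN(C)$ and $\mathbb{R}^{t}$.

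First I would establish the norm identity $\|v\|_{2}=\|f\|_{2}$. This follows immediately from $W_{t}^{\sT}W_{t}=\rI_{t}$, since
\begin{equation*}
	\|v\|_{2}^{2}=\|W_{t}f\|_{2}^{2}=f^{\sT}W_{t}^{\sT}W_{t}f=f^{\sT}f=\|f\|_{2}^{2}.
\end{equation*}
Next, I would rewrite the image norm under $\calA$ in terms of $f$. Because $\calA v=Av$ under the canonical bases, we have $\|\calA v\|_{2}=\|Av\|_{2}=\|(AW_{t})f\|_{2}$. As $v$ ranges over all nonzero elements of $\calN(C)$, the vector $f$ ranges over all nonzero elements of $\mathbb{R}^{t}$, so substituting these two identities into the definition of the operator norm yields
\begin{equation*}
	\|\calA\|=\max_{\substack{v\in\calN(C)\\ v\neq\mathbf{0}}}\frac{\|\calA v\|_{2}}{\|v\|_{2}}
	=\max_{\substack{f\in\mathbb{R}^{t}\\ f\neq\mathbf{0}}}\frac{\|(AW_{t})f\|_{2}}{\|f\|_{2}}
	=\|AW_{t}\|_{2}=\sigma_{\max}(AW_{t}),
\end{equation*}
where the last two equalities are the standard characterization of the induced matrix $2$-norm as the largest singular value.

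This argument is essentially routine, so there is no genuine obstacle; the only points requiring a line of justification are that the parametrization $f\mapsto W_{t}f$ is a bijection onto $\calN(C)$ (guaranteed by the basis property) and that the supremum is attained (automatic in this finite-dimensional setting). I would also remark that the result is consistent with \Cref{prop:GKB-op} and the surrounding discussion: although the statement is phrased for an arbitrary $2$-orthonormal basis $\{w_{i}\}_{i=1}^{t}$, the quantity $\sigma_{\max}(AW_{t})$ is independent of the particular choice of such a basis, since any two orthonormal bases of $\calN(C)$ differ by an orthogonal change of variables on the right of $AW_{t}$, which leaves the singular values unchanged.
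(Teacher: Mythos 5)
Your proof is correct and follows essentially the same route as the paper: both arguments use the isometric bijection $f\mapsto W_{t}f$ between $\mathbb{R}^{t}$ and $\calN(C)$ (via $W_{t}^{\sT}W_{t}=\rI$) to rewrite the operator norm of $\calA$ as the induced $2$-norm of $AW_{t}$. The additional remarks on attainment of the supremum and basis-independence are accurate but not needed beyond what the paper already records.
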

\begin{proof}
	For any $v\in\calN(C)$, there exist a unique $y\in\mathbb{R}^{t}$ such that $v=W_{t}y$, and $\|v\|_{2}=\|y\|_{2}$. Therefore, we have
	\begin{equation*}
		\|\calA\|=\max_{v\in\calN(C) \atop v\neq\mathbf{0}}\frac{\|\calA v\|_{2}}{\|v\|_{2}}
		= \max_{y\in\mathbb{R}^{t} \atop y\neq\mathbf{0}}\frac{\|AW_ty\|_2}{\|W_ty\|_2} 
		= \max_{y\in\mathbb{R}^{t} \atop y\neq\mathbf{0}}\frac{\|AW_ty\|_2}{\|y\|_2} 
		= \|AW_{t}\|_2 = \sigma_{\max}(AW_{t}).
	\end{equation*} 
	The proof is completed.
\end{proof}

Note from \Cref{prop:Anorm} that $\|\calA\|=\sigma_{\max}(AW_{t})$ does not depend on the choice of the 2-orthonormal basis of $\calN(C)$. To estimate $\sigma_{\max}(AW_{t})$, a very practical approach is to apply the GKB based SVD algorithm \cite{Golub1965}. Combining \cref{GKB_op1} and \cref{GKB_op2}, we can find that \textsf{NSR-GKB} generates the same $\{\gamma_i,\delta_i\}$ as that generated by the GKB of $AW_{t}$, whenever which 2-orthonormal basis $\{w_{i}\}_{i=1}^{t}$ is used. Therefore, we can use the largest singular value of $B_k$ generated by \textsf{NSR-GKB} to approximate $\sigma_{\max}(AW_{t})$, and it will not take too many iterations to get an accurate estimate.

\subsection{Two Krylov iterative methods for the LSE problem}\label{sec4.2}
Based on \Cref{alg:glsqr} and \Cref{alg:alg2}, we give two Krylov subspace based iterative algorithms for the LSE problem, which correspond to the two approaches at the end of \Cref{sec3}, respectively.
The first algorithm named \textit{Krylov Iterative Decomposed Solver-I} (\textsf{KIDS-I}) is shown in \Cref{alg:LSE1}, and the second algorithm named \textit{Krylov Iterative Decomposed Solver-II} (\textsf{KIDS-II}) is shown in \Cref{alg:LSE2}.

\begin{algorithm}[htb]
	\caption{Krylov Iterative Decomposed Solver-I (\textsf{KIDS-I}) for \cref{LSE2}}\label{alg:LSE1}
	\begin{algorithmic}[1]
		\Require $A\in\mathbb{R}^{m\times n}$, $C\in\mathbb{R}^{p\times n}$, $b\in\mathbb{R}^{m}$, $d\in\mathbb{R}^{p}$
		\State Compute $x_{1}^{\dag}=C_{A}^{\dag}d$ by \Cref{alg:glsqr}
    	\State Compute $x_{2}^{\dag}=A_{\calN(C)}^{\dag}b$ by \Cref{alg:alg2}
		\State Compute $x^{\dag}=x_{1}^{\dag}+x_{2}^{\dag}$
	\Ensure Approximate solution of \cref{LSE2}
	\end{algorithmic}
\end{algorithm}

\begin{algorithm}[htb]
	\caption{Krylov Iterative Decomposition Solver-II (\textsf{KIDS-II}) for \cref{LSE2}}\label{alg:LSE2}
	\begin{algorithmic}[1]
		\Require $A\in\mathbb{R}^{m\times n}$, $C\in\mathbb{R}^{p\times n}$, $b\in\mathbb{R}^{m}$, $d\in\mathbb{R}^{p}$
		\State Compute $\tilde{x}_{1}^{\dag}=C^{\dag}d$ by solving $\min_{x}\|Cx-d\|_{2}$
		\State Compute $\tilde{b}=b-A\tilde{x}_{1}^{\dag}$
    	\State Compute $\tilde{x}_{2}^{\dag}=A_{\calN(C)}^{\dag}\tilde{b}$ by \Cref{alg:alg2}
		\State Compute $x^{\dag}=\tilde{x}_{1}^{\dag}+\tilde{x}_{2}^{\dag}$
	\Ensure Approximate solution of \cref{LSE2}
	\end{algorithmic}
\end{algorithm}

We give a brief comparison between the above two algorithms. Generally, for large-scale problems, both the two algorithms have a nested inner-outer iteration structure: for \textsf{KIDS-II}, we need to solve \cref{ls2_gkb} at each iteration of \Cref{alg:alg2}, while for \textsf{KIDS-II}, we need to solve \cref{ls1_gkb} and \cref{ls2_gkb} at each iteration of \Cref{alg:glsqr} and \Cref{alg:alg2}, respectively. In \textsf{KIDS-I}, the computation of $x_{1}^{\dag}$ and $x_{2}^{\dag}$ can be performed simultaneously. However, in \textsf{KIDS-II}, the three steps (corresponding to lines 1--3 in \Cref{alg:LSE2}) must be performed sequentially.


\section{Numerical experiments}\label{sec5}
We present several numerical examples to illustrate the performance of the two proposed algorithms for the LSE problems. All experiments are conducted in MATLAB R2023b with double precision. It is worth noting that much of the existing literature on LSE problems lacks numerical results, primarily due to the difficulty of constructing nontrivial test problems, particularly for large-scale matrices. Based on the analysis of the LSE problems in \Cref{sec3}, we propose a procedure to construct LSE problems for testing purposes.

\paragraph*{Construct test problems.}
From the proof of \Cref{LSE_criterion}, the minimum 2-norm solution of \cref{LSE2} is $x^{\dag}=x_{1}^{\dag}+x_{2}^{\dag}$, where $x_{1}^{\dag}$ is the minimum 2-norm solutions of \cref{GLS} with $K=C$ and $L=A$, and $x_{2}^{\dag}$ is the minimum 2-norm solutions of \cref{GLS2}. With the help of the approach for constructing test problems for the GLS problems (see \cite[Section 5]{li2024new}), we can construct a test LSE problem using the following steps:
\begin{enumerate}
	\item[(1)] Choose two matrices $A\in\mathbb{R}^{m\times n}$ and $C\in\mathbb{R}^{p\times n}$. Compute $G=A^{\sT}A+C^{\sT}C$.
	\item[(2)] Compute a matrix $B\in\mathbb{R}^{n\times t}$ whose columns form a basis for $\calN(C)$.
	\item[(3)] Construct a vector $w_{1}\in\calR(G)$. Compute 
		\begin{equation}\label{x_true}
			x_{1}^{\dag} = w_1-B(B^{\sT}GB)^{-1}B^{\sT}Gw_1 .
		\end{equation}
	\item[(4)] Choose a vector $z_{1}\in\calR(C)^{\perp}$ and let $d=Cx_{1}^{\dag}+z_1$.
	\item[(5)] Construct a vector $w_{2}\in\mathbb{R}^{t}$ such that $w_{2}\perp\calN(AB)$, and choose a vector $z_2\in\calR(AB)^{\perp}$. 
	\item[(6)] Let $x_{2}^{\dag}=Bw_2$ and $b=Ax_{2}^{\dag}+z_2$.
	\item[(7)] Compute $x^{\dag}=x_{1}^{\dag}+x_{2}^{\dag}$.
\end{enumerate}
Note that the fourth step ensures that $x_{1}^{\dag}=C_{A}^{\dag}d$, while the sixth step ensures that $x_{2}^{\dag}=A_{\calN(C)}^{\dag}b$. By this construction, the minimum 2-norm solution of \cref{LSE2} is $x^{\dag}$. We remark that for large-scale matrices, computing \cref{x_true} can be extremely challenging. As a result, our experiments focus exclusively on small and medium-sized problems.

In the numerical experiments, we construct four test examples. For the first example, we set $A=D_1$, which is the scaled discretization of the first-order differential operator:
\begin{equation*}
	D_1=\begin{pmatrix}
		1 & -1 &  &  \\
		&  \ddots & \ddots &  \\
		&  &  1  & -1 \\
	\end{pmatrix} \in \mathbb{R}^{(n-1)\times n}.
\end{equation*}
The matrix $C\in\mathbb{R}^{2324\times 4486}$ named {\sf lp\_bnl2} comes from linear programming problems and is sourced from the SuiteSparse Matrix Collection \cite{Davis2011}. 
Let $w_1=(1,\cdots,1)^{\sT}\in\mathbb{R}^{n}$. We use the MATLAB built-in function \texttt{null.m} to compute a basis matrix $B$ for $\calN(C)$, and we let $w_2$ be the first column of $(AB)^{\sT}$. To obtain vectors $z_{1}$ and $z_{2}$, we we compute the projections of the random vectors \texttt{randn(p,1)} and \texttt{randn(m,1)} onto $\calR(C)^{\perp}$ and $\calR(AB)^{\perp}$, respectively.

For the second example, we set $A=D_2$, which is the scaled discretization of the second-order differential operator:
\begin{equation*}
	D_2=\begin{pmatrix}
		-1 & 2 & -1 &  &  \\
		&  \ddots & \ddots & \ddots &  \\
		&  & -1 & 2 &  & -1 \\
	\end{pmatrix} \in \mathbb{R}^{(n-2)\times n},
\end{equation*}
and the matrix $C\in\mathbb{R}^{5190\times 9690}$ named {\sf r05} comes from linear programming problems, taken from \cite{Davis2011}. We use almost the similar setting as the above for constructing the LSE problem, where the only difference is that we construct $w_1\in\mathbb{R}^{n}$ by evaluating the function $f(t)=t$ on a uniform grid over the interval $[0,1]$, that is, $w_{1}(k)=\frac{k-1}{n-1}$ for $k=1,\dots,n$.

For the third example, we choose the matrix $M\in\mathbb{R}^{3534\times 3534}$ named {\sf cage9} from \cite{Davis2011}, which arises from the directed weighted graph problem. Then we set $A=M(:, 1:2500)$ and $C=M(:, 2501:3534)$. Then we construct the LSE problem using almost the same setting as the above, where the only difference is that we construct $w_1\in\mathbb{R}^{n}$ by evaluating the function $f(t)=t^{2}$ on a uniform grid over the interval $[-1,1]$, that is, $w_{k}(k)=\left(\frac{2(k-1)}{n-1}-1\right)^2$ for $k=1,\dots,n$. We use {\sf cage9-I} and {\sf cage9-II} to denote $A$ and $C$, respectively.

For the fourth example, we choose the matrix $M\in\mathbb{R}^{9728\times 9728}$ named {\sf pf2177} from \cite{Davis2011}, which arises from the optimization problem. Then we set $A=M(:, 1:6500)$ and $C=M(:, 6501:9728)$. Then we construct the LSE problem using almost the same setting as the above, where the only difference is that we construct $w_1\in\mathbb{R}^{n}$ by evaluating the function $f(t)=\sin(2t)+3\cos(t)$ on a uniform grid over the interval $[-\pi,\pi]$, that is, $w(k)=\sin\left(\frac{4\pi(k-1)}{n-1}-2\pi\right)-3\cos\left(\frac{2\pi(k-1)}{n-1}-\pi\right)$ for $k=1,\dots,n$. We use {\sf pf2177-I} and {\sf pf2177-II} to denote $A$ and $C$, respectively.  

Several properties of the matrices in the four test examples are listed in \Cref{tab1}.

\begin{table}[h]
\caption{Properties of the test examples.}
\begin{tabular*}{\textwidth}{@{\extracolsep\fill}lcccccc}
\toprule%
& \multicolumn{3}{@{}c@{}}{$A$} & \multicolumn{3}{@{}c@{}}{$C$} \\\cmidrule{2-4}\cmidrule{5-7}%
Example & name & $m\times n$ & $\kappa(A)$ & name & $p\times n$ & $\kappa(C)$ \\
\midrule
1  & $D_1$ & $4485\times 4486$ & 2855.90 & {\sf lp\_bnl2} & $2324\times 4486$ & 7765.31 \\
2  & $D_2$ & $9688\times 9690$  & $1.68\times 10^{7}$  & {\sf r05} & $5190\times 9690$ & 121.82 \\
3  & {\sf cage9-I} & $2500\times 3534$  & 3.93 & {\sf cage9-II} & $1034\times 3534$ & 12.48 \\
4  & {\sf pf2177-I} & $6500\times 9728$  & 134.84  & {\sf pf2177-II} & $3228\times 9728$  & 44.00 \\
\botrule
\end{tabular*}\label{tab1}
\end{table}

\begin{figure}[!htbp]
	\centering
	\subfloat[]
	{\label{fig:5a}\includegraphics[width=0.49\textwidth]{./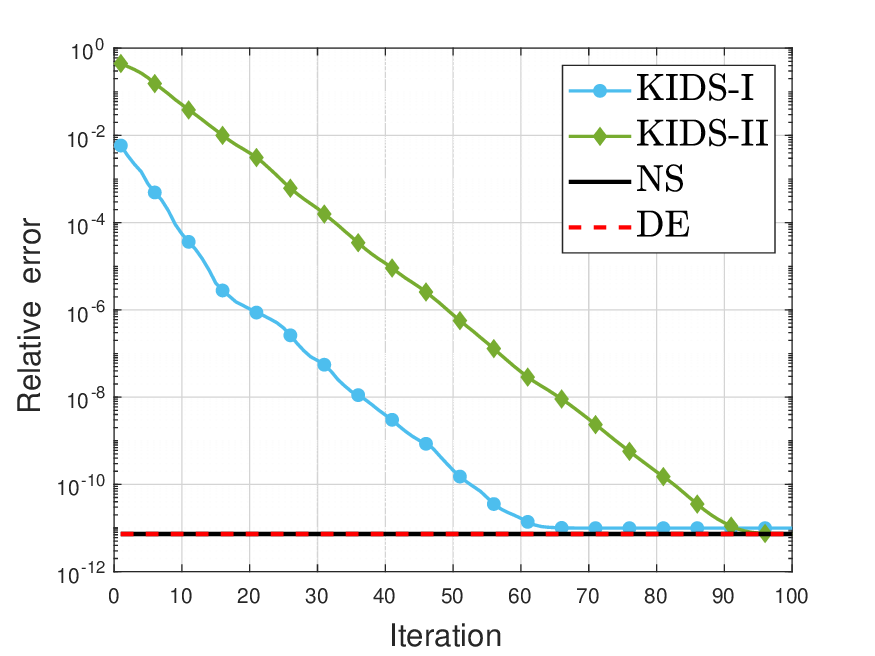}}\hspace{-0.0mm}
	\subfloat[]
	{\label{fig:5b}\includegraphics[width=0.48\textwidth]{./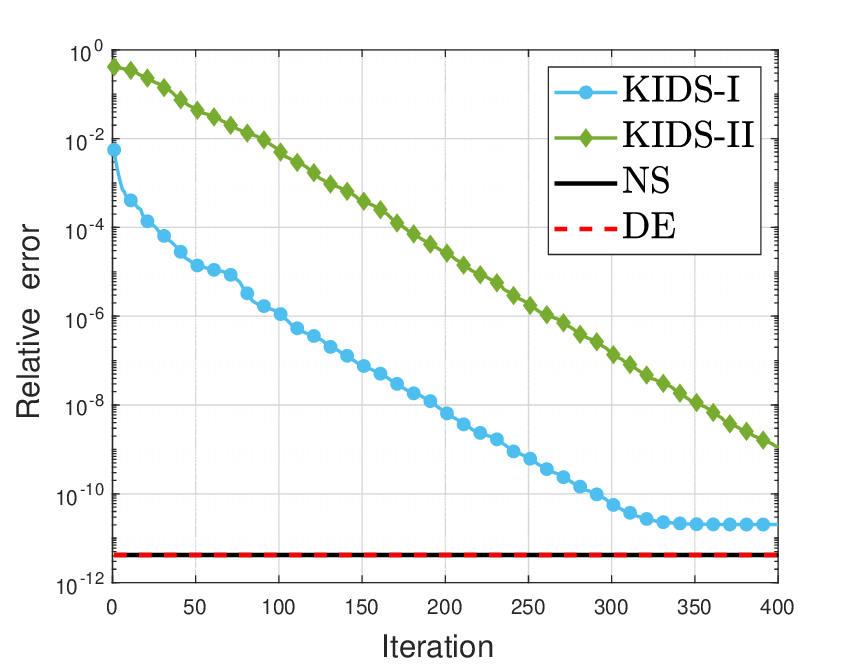}}
	\vspace{-3mm}
	\subfloat[]
	{\label{fig:5c}\includegraphics[width=0.49\textwidth]{./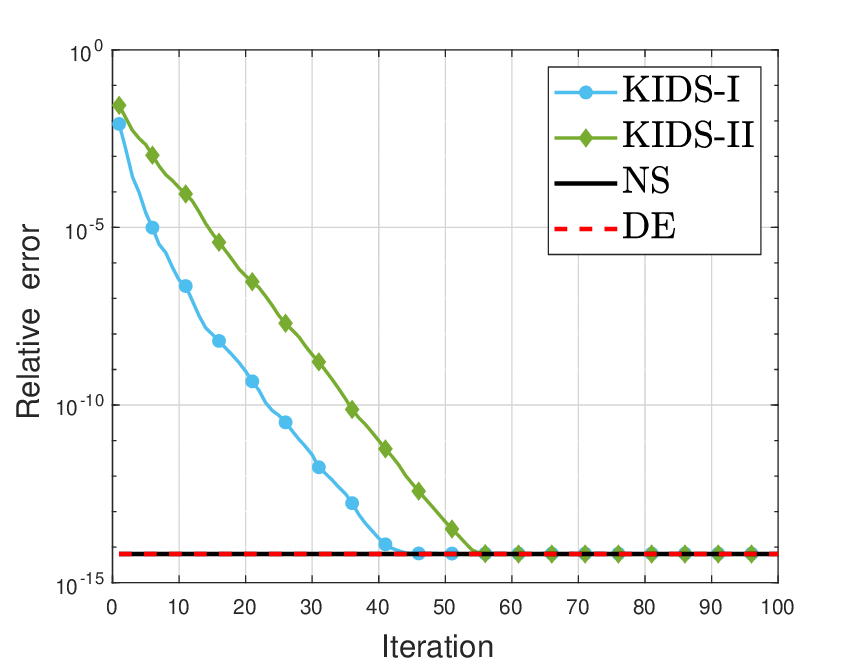}}\hspace{-0.0mm}
	\subfloat[]
	{\label{fig:5d}\includegraphics[width=0.50\textwidth]{./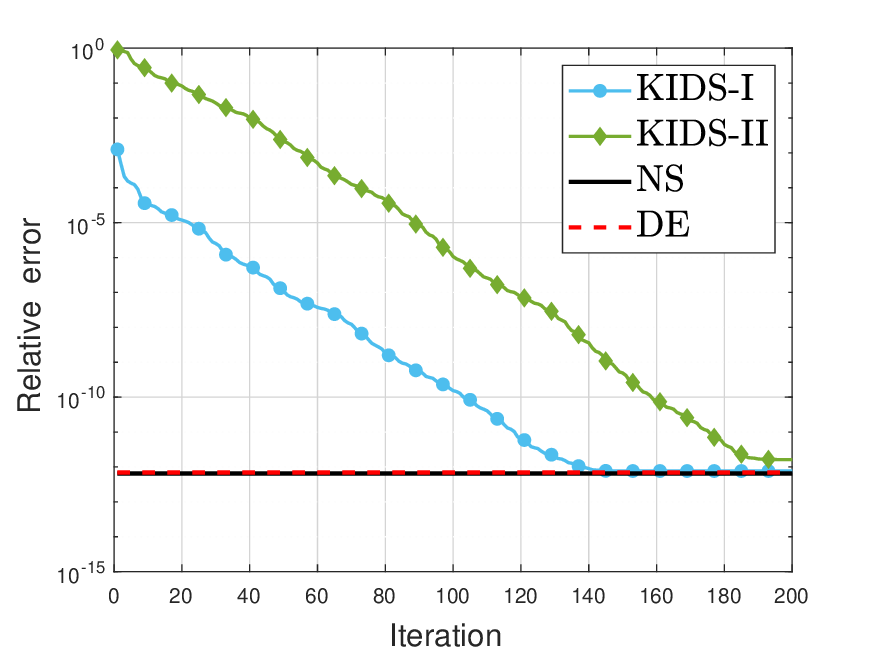}}
	\vspace{-0mm}
	\caption{The convergence history of \textsf{KIDS-I} and \textsf{KIDS-II} with respect to the true solution, where all the inner iterations are computed accurately. (a) \{$D_1$, {\sf lp\_bnl2}\}; (b)\{$D_2$, {\sf r05}\}; (c) \{{\sf cage9-I}, {\sf cage9-II}\}; (d) \{{\sf pf2177-I}, {\sf pf2177-II}\}.}
	\label{fig1}
\end{figure}

\paragraph*{Experimental results.}
In this experiment, we demonstrate the convergence behavior and the final accuracy of the approximate solutions computed by \textsf{KIDS-I} and \textsf{KIDS-II}. For comparison, we also compute two solutions using the null space method and the direct elimination method, denoted as ``NS'' and ``DE'', respectively. For the \textsf{KIDS-I} algorithm, at the $k$-th step, we compute the approximations to $x_{1}^{\dag}$ and $x_{2}^{\dag}$, respectively, which are denoted by $x_{1k}$ and $x_{2k}$. We then compute the $k$-th approximate solution of \cref{LSE0} as $x_{k}=x_{1k}+x_{2k}$. For the \textsf{KIDS-II} algorithm, we first compute $\tilde{x}_{1}^{\dag}$, which is the solution to the LS problem $\min_{x}\|Cx-d\|_2$. Then we apply \Cref{alg:alg2} to compute the approximations to $\tilde{x}_{2}^{\dag}$, where we denote the $k$-th approximation by $\tilde{x}_{2k}$. The $k$-th approximate solution of \cref{LSE0} by \textsf{KIDS-II} is $x_k=\tilde{x}_{1}^{\dag}+\tilde{x}_{2k}$. In this experiment, all the inner iterations are computed accurately.

\Cref{fig1} shows the convergence history of the two algorithms with respect to the true solution. We have three key findings. First, for both algorithms, all the approximate solutions eventually converge to the exact solution of the LSE problem, with accuracy that is almost the same as, or slightly lower than, the solutions obtained by the NS or DE methods. Second, both \textsf{KIDS-I} and \textsf{KIDS-II} exhibit a linear convergence rate for the four test problems. Since the two algorithms are based on the operator-form GKB process, we hypothesize that the convergence rate may share similarities with the LSQR algorithm. However, a more detailed investigation is needed in the future to confirm this. Third, compared with \textsf{KIDS-II}, \textsf{KIDS-I} requires fewer iterations to achieve a solution with a given accuracy. However, it is not yet clear whether this is a general property of the algorithms.

\begin{figure}[!htbp]
	\centering
	\subfloat[]
	{\label{fig:2a}\includegraphics[width=0.48\textwidth]{./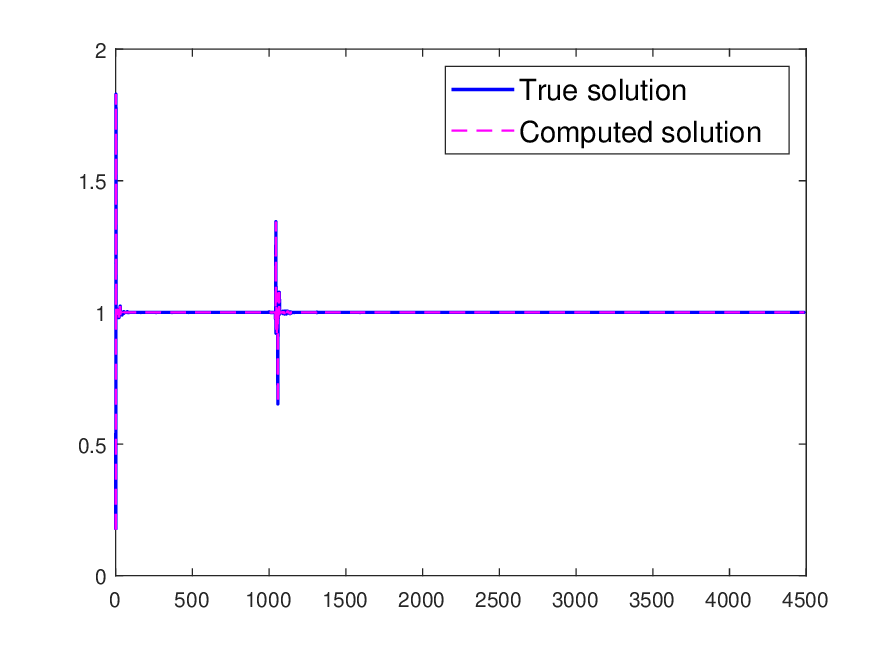}}\hspace{-0.0mm}
	\subfloat[]
	{\label{fig:2b}\includegraphics[width=0.48\textwidth]{./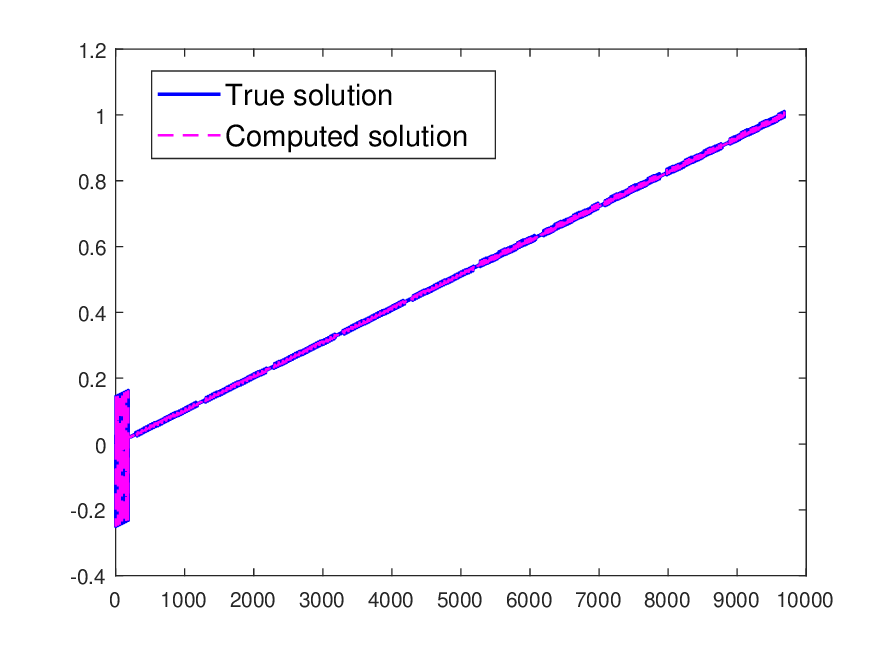}}
	\vspace{-3mm}
	\subfloat[]
	{\label{fig:2c}\includegraphics[width=0.49\textwidth]{./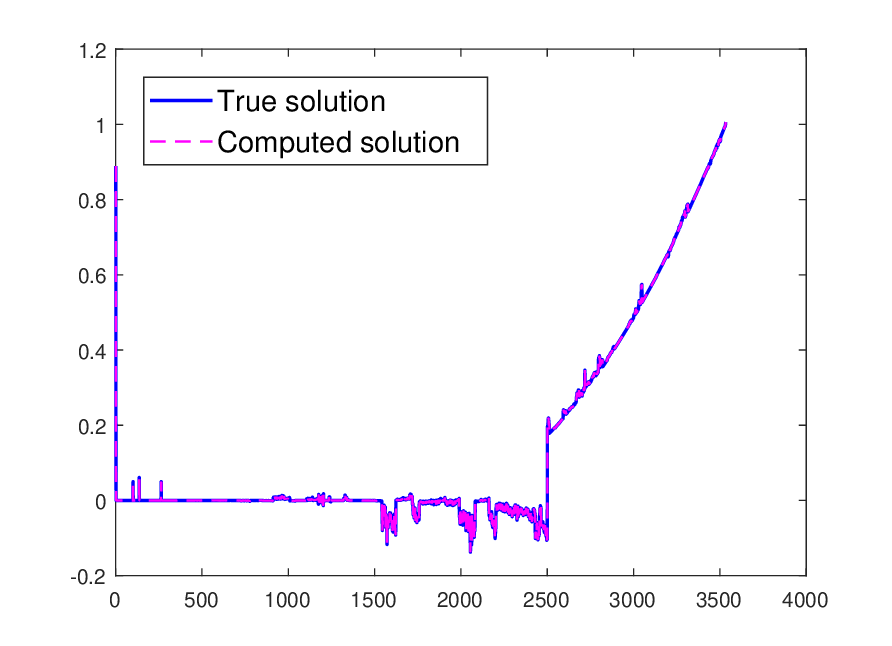}}\hspace{-0.0mm}
	\subfloat[]
	{\label{fig:2d}\includegraphics[width=0.48\textwidth]{./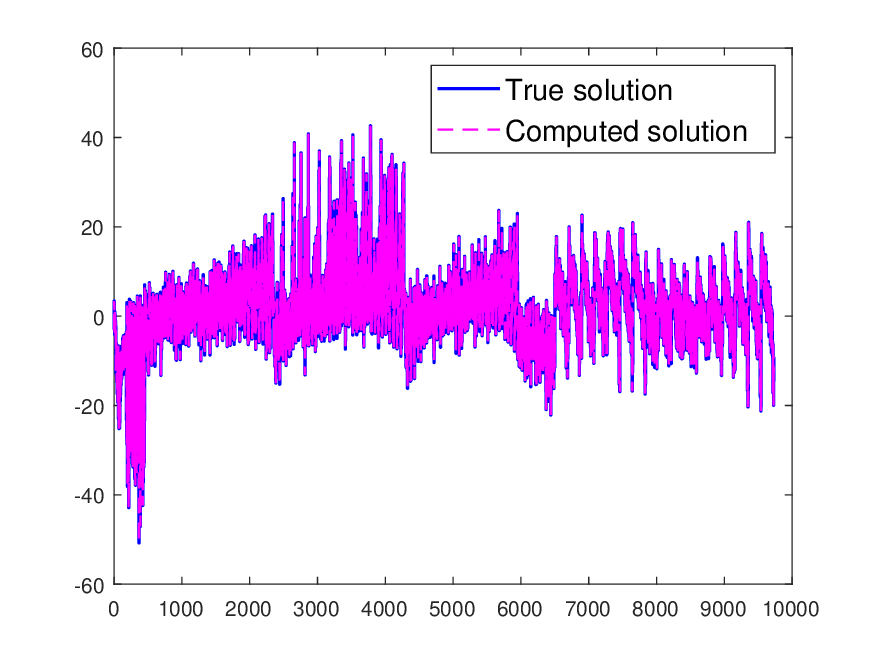}}
	\vspace{-0mm}
	\caption{Curves for the true and computed solutions obtained by \textsf{KIDS-I} at the final iteration. (a) \{$D_1$, {\sf lp\_bnl2}\}; (b)\{$D_2$, {\sf r05}\}; (c) \{{\sf cage9-I}, {\sf cage9-II}\}; (d) \{{\sf pf2177-I}, {\sf pf2177-II}\}.}
	\label{fig2}
\end{figure}

In \Cref{fig2} we plot the curve corresponding to $x_k$ computed by \textsf{KIDS-I} at the final iteration, alongside the true solution $x^{\dag}$. It is important to note that the curves corresponding to the true solutions are not smooth, as the operations used in constructing the test problems can lead to oscillating vectors. We remark that constructing a smooth true solution based on the proposed procedure for generating a test LSE problem is quite challenging. From the figure, we observe that the computed solutions closely match the true solution. These results demonstrate the effectiveness of the proposed algorithms in iteratively solving LSE problems.

\begin{figure}[!htbp]
	\centering
	\subfloat[]
	{\label{fig:3a}\includegraphics[width=0.49\textwidth]{./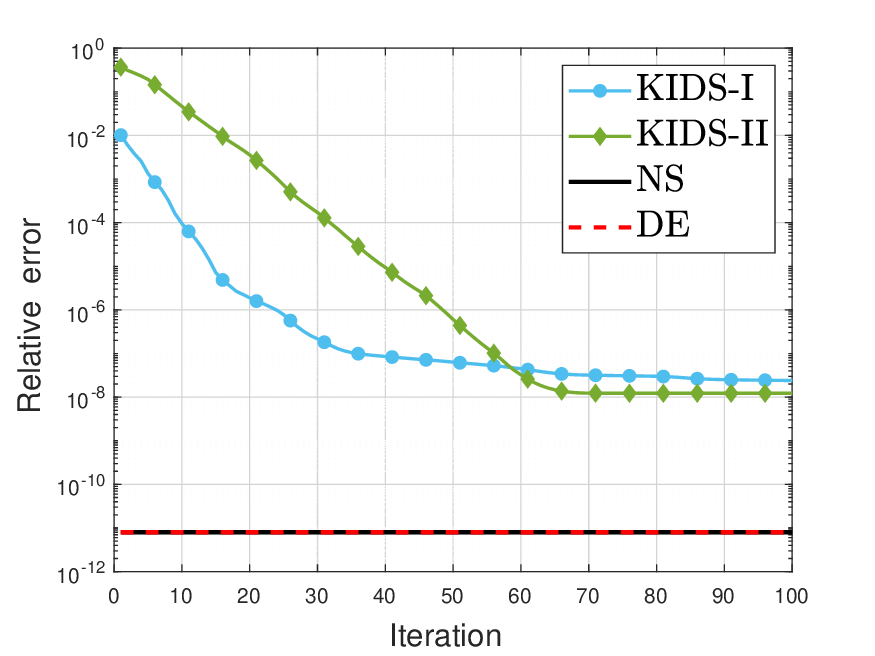}}\hspace{-0.0mm}
	\subfloat[]
	{\label{fig:3b}\includegraphics[width=0.48\textwidth]{./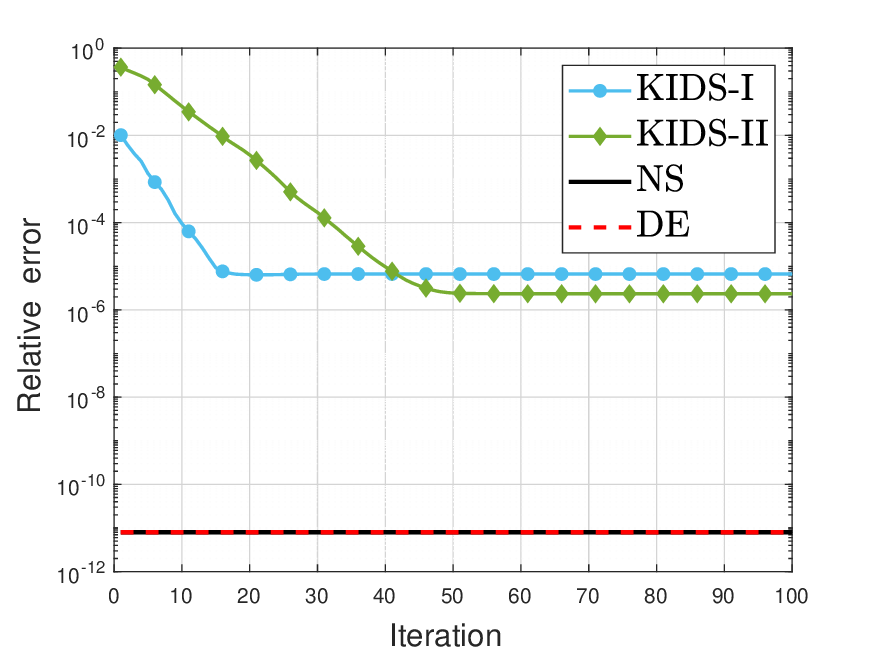}}
	\vspace{-3mm}
	\subfloat[]
	{\label{fig:3c}\includegraphics[width=0.49\textwidth]{./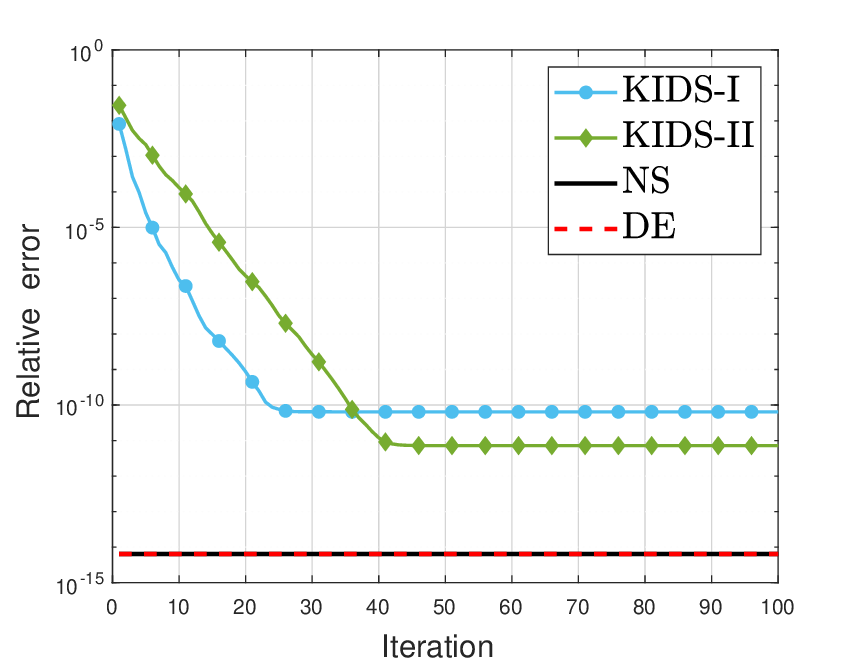}}\hspace{-0.0mm}
	\subfloat[]
	{\label{fig:3d}\includegraphics[width=0.50\textwidth]{./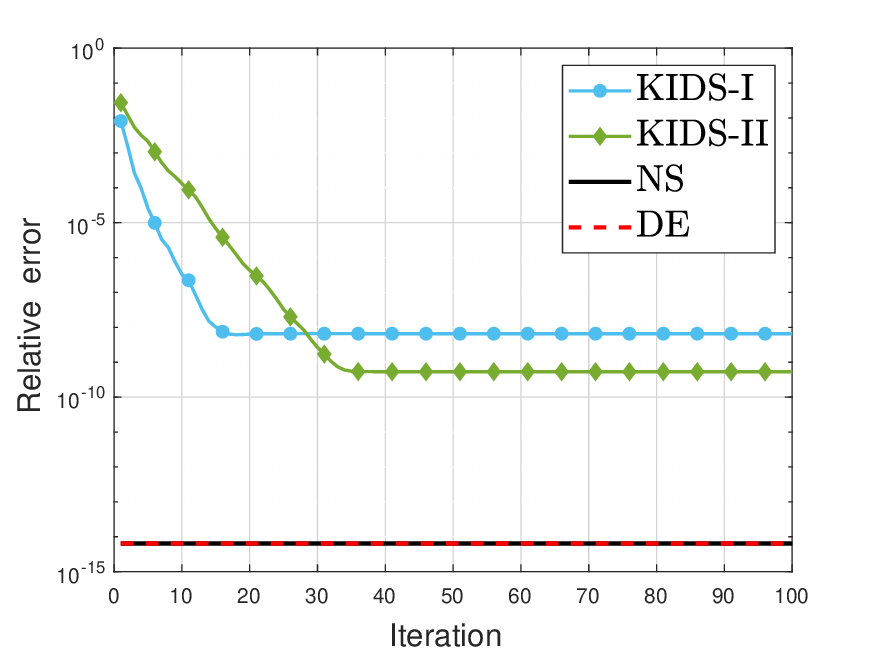}}
	\vspace{-0mm}
	\caption{The convergence history of \textsf{KIDS-I} and \textsf{KIDS-II} with respect to the true solution, where the inner iterations are approximated by solving \cref{ls1_gkb} and \cref{ls2_gkb} by LSQR with stopping tolerance $\tau$. (a) \{$D_1$, {\sf lp\_bnl2}\}, $\tau=10^{-10}$; (b)\{$D_1$, {\sf lp\_bnl2}\}, $\tau=10^{-8}$; (c) \{{\sf cage9-I}, {\sf cage9-II}\}, $\tau=10^{-10}$; (d) \{{\sf cage9-I}, {\sf cage9-II}\}, $\tau=10^{-8}$.}
	\label{fig3}
\end{figure}

In this experiment, we investigate how the inaccuracy in computing the inner iterations of both \textsf{KIDS-I} and \textsf{KIDS-II} affects the final accuracy of the approximate solutions. For \textsf{KIDS-I}, at each iteration, we use LSQR with stopping tolerance $\tau$ to iteratively solving \cref{ls1_gkb} and \cref{ls2_gkb} for approximating $x_{1}^{\dag}$ and $x_{2}^{\dag}$, respectively. For \textsf{KIDS-II}, we first compute an exact solution $\tilde{x}_{1}^{\dag}$, and then use LSQR with stopping tolerance $\tau$ to iteratively solving \cref{ls2_gkb} for approximating $\tilde{x}_{2}^{\dag}$. The stopping tolerance value $\tau$ for LSQR are set to $10^{-10}$ and $10^{-8}$. For simplicity, we only present the results for the first and third examples, as the results for the other two examples are similar.
From \Cref{fig3}, we observe that the value of $\tau$ significantly impacts the final accuracy of $x_k$, with the accuracy being approximately on the order of $\mathcal{O}(\tau)$. On the other hand, the convergence rate is not affected very much. It is important to investigate how the final accuracy of the computed solution is influenced by the value of $\tau$, especially because, for large-scale problems, it is not feasible to compute the inner iterations accurately. This aspect should be explored further in future work.

\begin{figure}[!htbp]
	\centering
	\subfloat[$\tau_2=0$]
	{\label{fig:4a}\includegraphics[width=0.48\textwidth]{./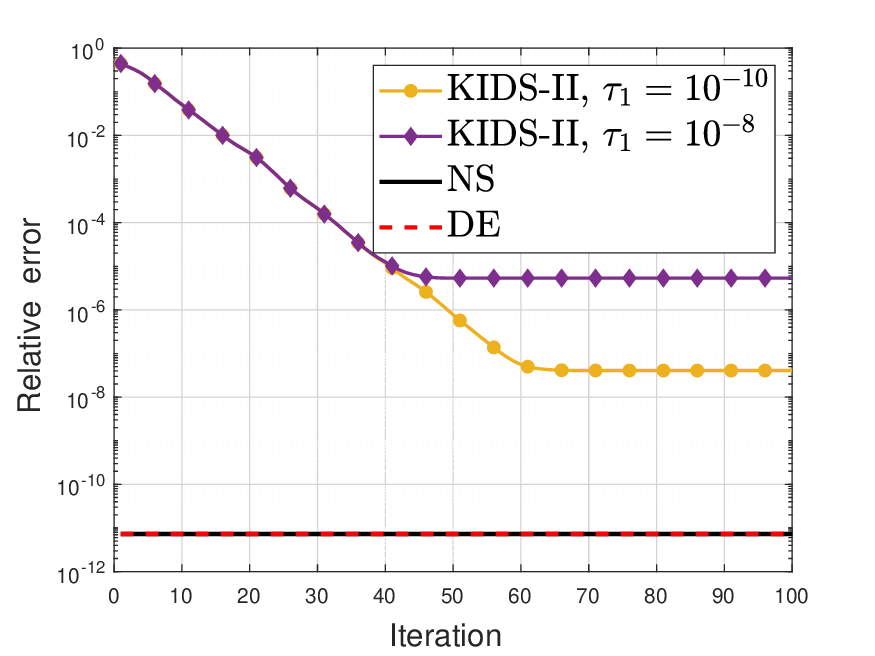}}\hspace{-0.0mm}
	\subfloat[$\tau_2=\tau_1$]
	{\label{fig:4b}\includegraphics[width=0.49\textwidth]{./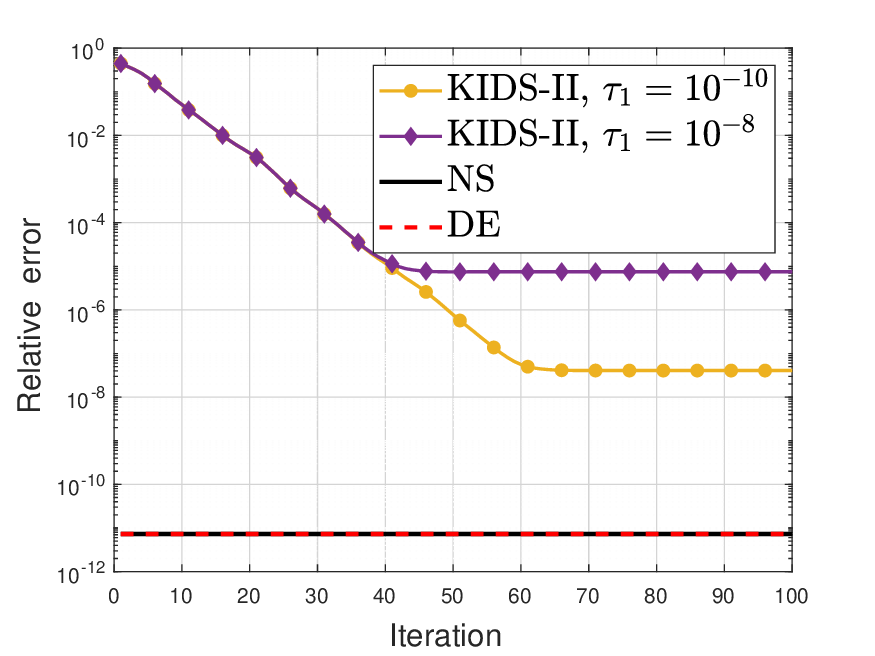}}
	\vspace{-0mm}
	\caption{The convergence history of \textsf{KIDS-II} with respect to the true solution, where $\tilde{x}_{1}^{\dag}=\argmin_{x}\|Cx-d\|_2$ is computed by LSQR with stopping tolerance $\tau_1$, and the inner iteration is approximated by solving \cref{ls2_gkb} by LSQR with stopping tolerance $\tau_2$. The test example is \{$D_1$, {\sf lp\_bnl2}\}.}
	\label{fig4}
\end{figure}

In this experiment, we explore how the solution accuracy of $\tilde{x}_{1}^{\dag}=\argmin_{x}\|Cx-d\|_2$ influences the final accuracy of $x^{\dag}$. To obtain an approximate $\tilde{x}_{1}^{\dag}$, we apply the LSQR algorithm to solve $\min_{x}\|Cx-d\|_2$ with a stopping tolerance set to $\tau_1$. The inner iteration is approximated by solving \cref{ls2_gkb} using the LSQR algorithm, with the stopping tolerance set to $\tau_2$. We only show the experimental results for the first example, as the results for the other examples are similar. First, we set $\tau_1=10^{-10}$ and $\tau_{1}=10^{-8}$, respectively, and set $\tau_{2}=0$, meaning that we compute the inner iteration accurately. The convergence history is shown in \Cref{fig:4a}. We observe that an inaccurate $\tilde{x}_{1}^{\dag}$ affects the final accuracy of $x^{\dag}$, even when the inner iterations are computed accurately. Second, we set $\tau_1=\tau_2=10^{-10}$ and $\tau_{1}=\tau_2=10^{-8}$, respectively. The convergence history is shown in \Cref{fig:4b}. We observe that when the inner iterations are performed with the same accuracy as $\tilde{x}_{1}^{\dag}$, then the final accurately of $x^{\dag}$ is comparable to the accuracy achieved when the inner iterations are computed accurately. Since the solution error of $\tilde{x}_{1}^{\dag}$ may be amplified in the subsequent computation, analyzing the impact of the inaccuracy in $\tilde{x}_{1}^{\dag}$ on the final accuracy of $x^{\dag}$ is more complex than simply analyzing the inner iterations. This also implied that \textsf{KIDS-II} can be more susceptible to computational errors than \textsf{KIDS-I}. A systematic comparison of the two algorithms and their susceptibility to computational errors will be explored in future work.

\section{Conclusion and outlook}\label{sec6}
In this paper, we have introduced a novel approach to solving the LSE problems by reformulating them as operator-type LS problems. This perspective allows us to decompose the solution of the LSE problem into two components, each corresponding to a simpler operator-based LS problem. We have derived two types of decomposed-form solutions, and building on the decompositions, we have developed two Krylov subspace based iterative methods that efficiently approximate the solution without relying on matrix factorizations. The two proposed algorithms, named \textsf{KIDS-I} and \textsf{KIDS-II}, follow a nested inner-outer structure, where the inner subproblem can be computed iteratively. We have proposed an approach to construct the LSE problems for testing purposes, and used several test examples to demonstrate the effectiveness of the algorithms.

The primary computational bottleneck of the proposed algorithms is the computation of the inner iteration. Since constructing very large-scale test examples is challenging, we have limited our numerical experiments to small and medium-sized matrices. In the future, we will explore additional theoretical and computational strategies to improve the efficiency of the inner iteration as well as construct larger-scale test problems to further assess the performance of the two algorithms.










\section*{Declarations}
\begin{itemize}
\item Competing Interests: Not applicable
\end{itemize}

\bibliography{references}

\end{document}